\newtheorem{theorem}{Theorem}[section]
\newtheorem{lemma}[theorem]{Lemma}
\newtheorem{proposition}[theorem]{Proposition}
\begin{document}

\title{On finite groups whose power graphs satisfy certain connectivity conditions}
\author{Ramesh Prasad Panda\thanks{Department of Mathematics, School of Advanced Sciences, VIT-AP University, Amaravati, PIN-522237, Andhra Pradesh, India.}}

\date{}

	\maketitle
	
		\makeatletter{\renewcommand*{\@makefnmark}{}
		\footnotetext{Email address: {\tt rameshprpanda@gmail.com}}
		\footnotetext{ORCID ID: \href{https://orcid.org/0000-0001-7901-9828}{0000-0001-7901-9828}}

\begin{abstract}
Consider a graph $\Gamma$. A set $ S $ of vertices in $\Gamma$ is called a {cyclic vertex cutset} of $\Gamma$ if $\Gamma - S$ is disconnected and has at least two components containing cycles. If $\Gamma$ has a cyclic vertex cutset, then it is said to be {cyclically separable}. The {cyclic vertex connectivity} is the minimum cardinality of a cyclic vertex cutset of $\Gamma$.
The power graph $\mathcal{P}(G)$ of a group $G$ is the undirected simple graph with vertex set $G$ and two distinct vertices are adjacent if one of them is a positive power of the other. If $G$ is a cyclic, dihedral, or dicyclic group, we determine the order of $G$ such that $\mathcal{P}(G)$ is cyclically separable. Then we characterize the equality of vertex connectivity and cyclic vertex connectivity of $\mathcal{P}(G)$ in terms of the order of $G$.

\medskip

\noindent {\bf Key words.} Cyclically separable graph, Cyclic connectivity, Power graph, Finite cyclic group 

\noindent {\bf AMS subject classification.} 05C25, 05C40

\end{abstract}

\section{Introduction}

The notion of graphs defined on groups has been in existence since the work of Cayley \cite{cayley1878desiderata} in 1878. These graphs came to known as Cayley graphs, and have been widely studied in literature, see \cite{li2002,Witte}. In the last few decades, several other graphs associated with groups, such as commuting graphs \cite{Brauer}, prime graphs \cite{Williams}, and conjugacy class graphs \cite{Bertram2}, were introduced. These graphs have been studied extensively by researchers, and have various applications \cite{Bianchi,cooperman,Hayat,kelarev_data}. Kelarev and Quinn \cite{kelarevquinn} introduced power graphs with similar interest. The \emph{power graph} of a group $ G $,
denoted by $\mathcal{P}(G)$, is the simple undirected graph with vertex set  $ G $, and two vertices  are adjacent in the graph if one of them is a positive power of the other in $G$. In recent years, various aspects of power graphs have been investigated; see \cite{Abawajy,Kumar} and the references therein.

Let $\Gamma$ be an undirected and simple graph. The \emph{vertex connectivity} $\kappa(\Gamma)$ of $\Gamma$ is the minimum number of vertices whose deletion either disconnects the graph or reduces it to a trivial graph. A set $ S $ of vertices is called a \emph{vertex cutset} of $\Gamma$ if $ \Gamma - S $ is disconnected. A vertex cutset is \emph{minimal} if none of its proper subsets disconnects $\Gamma$. A vertex cutset of $\Gamma$ with minimum number of elements is called a \emph{minimum vertex cutset} of $\Gamma$. For non-complete graphs, the vertex connectivity of $ \Gamma $ is the number of elements of a minimum vertex cutset of $ \Gamma $. A \emph{cyclic vertex cutset} is a vertex cutset $ S $ such that $ \Gamma - S $ contains at least two components, each of which includes a cycle. A graph possessing a cyclic vertex cutset is said to be \emph{cyclically separable}. The \emph{cyclic vertex connectivity}, denoted by $ c\kappa(\Gamma) $, is the minimum cardinality of all cyclic vertex cutsets in $ \Gamma $. If no such cutset exists, $ c\kappa(\Gamma) $ is considered to be infinite. Similarly, the concept of edge connectivity and cyclic edge connectivity are defined by considering the  edge deletion instead of vertices. 

The concept of cyclic connectivity was first introduced in the proof of Tait's well-known but incorrect conjecture in 1880 \cite{tait1880}. With this, Tait aimed to prove the four-color theorem. Later, Birkhoff \cite{birkhoff1913} utilized the concept of cyclic connectivity to reduce the four-color problem to a particular class of planar cubic graphs. Cyclic connectivity is known to have applications in areas such as integer flow conjectures \cite{zhang1997} and network reliability analysis \cite{latifi1994}. For further literature on cyclic connectivity, see \cite{liu2011,liu2022,nedela2022,robertson1984} and references therein.

Chattopadhyay and Panigrahi \cite{chattopadhyay2014} studied the vertex connectivity of the power graphs of finite cyclic, dihedral and dicyclic groups. Panda and Krishna \cite{panda2018a} and Chattopadhyay et al. \cite{chattopadhyay2019} computed the vertex connectivity $\kappa(\mathcal{P}(C_n))$ for various orders $n$ of the cyclic group $C_n$. The computation of $\kappa(\mathcal{P}(C_n))$ was further extended in \cite{chattopadhyay2020, Mukherjee2024}. In \cite{chattopadhyay-noncyclic}, Chattopadhyay et al. considered the power graph $\mathcal{P}(G)$ of a non-cyclic nilpotent group $G$ and obtained the vertex connectivity for all $G$ satisfying some conditions. Panda and Krishna \cite{panda2018b} showed that the edge connectivity and minimum degree coincide for power graphs of finite groups. Then they computed the minimum degree of power graphs of some finite groups. Panda et al. \cite{panda2021, panda2023} obtained the minimum degree of power graphs of finite cyclic and non-cyclic nilpotent groups. Furthermore, in \cite{panda2024equality}, the authors characterized the equality of vertex connectivity and minimum degree of power graphs of finite nilpotent groups. 

In \cite{panda2024}, the present author obtained the finite $p$-groups whose power graphs are cyclically separable. Then the author characterized the finite $p$-groups whose power graphs have equal vertex connectivity and cyclic vertex connectivity. In this paper, we first determine the finite cyclic, dihedral, and dicyclic groups whose power graphs are cyclically separable. Then we characterize the equality of vertex connectivity and cyclic vertex connectivity of power graphs of these groups.

Throughout, for a positive integer $n$,  we fix the prime factorization  $n = p_1^{\alpha_1}p_2^{\alpha_2} \cdots p_r^{\alpha_r}$, where $r \geq 2$, $\alpha_1,\alpha_2,\ldots, \alpha_r$ are positive integers, and $p_1<p_2<\cdots <p_r$ are primes. For any set $A$, we write $|A|$ for the number of elements in $A$.

All graphs considered in this paper are undirected and simple. Consider a graph $\Gamma$. The {\em neighbourhood} of a vertex $v$ in $\Gamma$, denoted by $N(v)$, is the set of vertices adjacent to $v$. Whereas, the \emph{degree} of $v$ in $\Gamma$, denoted by $\deg(v)$, is the number of vertices adjacent to $v$. So, $\deg(v) = |N(v)|$. In fact, for any vertex set $A$ of $\Gamma$, we can define its {\em neighbourhood} $N(A)$ as the set of all vertices which does not belong to $A$ and are adjacent to some vertex in $A$. We observe that $N(A) = \cap_{v \in A} N(v)$.

For any group $G$ and for $x \in G$, $\langle x \rangle$ is the cyclic subgroup of $G$ generated by $x$. Whereas, $[x]$ is the set of generators of $\langle x \rangle$. We denote by $C_n$ the cyclic group of order $n$. Let $x \in C_n$ with order $\text{o}(x)$.  By \cite[Lemma 2.7]{cur-2014} (also, see\cite[Lemma 3.4]{MRS-JAA}), we have the following formula for $\deg(x)$:
\begin{equation}\label{eqn-1}
	\deg(x) = \text{o}(x) + \underset{d\, | \, \frac{n}{\text{o}(x)}} \sum \phi \left( \frac{n}{d} \right) - \phi \left(\text{o}(x) \right) -1,
\end{equation}
where $\phi$ is the Euler's totient function.

 For any positive divisor $d$ of $n$, we define the following subsets and subgroups of $C_n$:

\medskip
$O_d$ = the set of elements  in $C_n$ of order $d$,

$H_d$ = the subgroup of $C_n$ of order $d$.
\medskip \\
If $x \in C_n$ is of order $d$, then $H_d = \langle x \rangle$ and $O_d = [x]$. Thus $|O_d| = \phi(d)$ and $|H_d| = d$. Note that $O_d$ is precisely the set of elements of order $d$ in $C_n$. From the definition of power graph, $O_d$ induces a clique of size $\phi(d)$ in $\mathcal{P}(C_n)$. Also, for  positive  divisors $c$ and $d$ of $n$, each element in $O_c$ is adjacent to every element in $O_d$ if and only if  $c \mid d$ or  $d \mid c$.

Note that $O_n$ is the set of all generators, and $O_1$ is the set of identity element of $C_n$. Thus each vertex in $O_n \cup O_1$ is adjacent to every other veretx in $ \mathcal{P}(C_n) $. A trivial consequence of this is the following lemma.

\begin{lemma}
	For any poitive integer $n$ and any cutset  $X$ of $\mathcal{P}(C_n)$, $O_n \cup O_1 \subseteq X$. 
\end{lemma}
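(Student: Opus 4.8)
The plan is to exploit the observation recorded just before the statement: every vertex in $O_n \cup O_1$ is adjacent in $\mathcal{P}(C_n)$ to every other vertex, i.e.\ these are \emph{universal} (dominating) vertices of the power graph. The guiding principle is that a universal vertex cannot be cut off from the rest of the graph by deleting \emph{other} vertices; hence it must itself belong to every cutset.

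Concretely, I would argue by contradiction. Suppose $X$ is a cutset of $\mathcal{P}(C_n)$ and that some $x \in O_n \cup O_1$ does not lie in $X$. Then $x$ is a vertex of $\mathcal{P}(C_n) - X$, and since $x$ is adjacent to every other vertex of $\mathcal{P}(C_n)$, it remains adjacent to every other vertex of $\mathcal{P}(C_n) - X$. Therefore any two vertices of $\mathcal{P}(C_n) - X$ are joined through $x$ (or directly, when one of them is $x$), so $\mathcal{P}(C_n) - X$ is connected. This contradicts the defining property of a cutset, namely that $\mathcal{P}(C_n) - X$ is disconnected. Since $x$ was an arbitrary element of $O_n \cup O_1$, we conclude $O_n \cup O_1 \subseteq X$.

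The argument is entirely routine, so there is no real obstacle; the only point meriting a word of care is the degenerate case in which $\mathcal{P}(C_n) - X$ has at most one vertex. In that situation $\mathcal{P}(C_n) - X$ is (trivially) connected, so it again cannot arise from a genuine cutset, which is consistent with the paper's convention that a cutset must actually disconnect the graph. One could also phrase the whole proof without contradiction, simply noting that the subgraph induced on $(O_n \cup O_1) \cup \{v\}$ for any $v \notin X$ exhibits a path avoiding $X$ between any prescribed pair of surviving vertices; I would, however, keep the short contradiction form for readability.
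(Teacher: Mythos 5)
Your argument is correct and is exactly the reasoning the paper has in mind: the paper states the lemma as a ``trivial consequence'' of the observation that every vertex of $O_n \cup O_1$ is adjacent to all other vertices, which is precisely the universal-vertex argument you spell out. Nothing further is needed.
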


Let $\Gamma$ be a graph with the vertex set $V(\Gamma)$. Then for any $S \subseteq V(\Gamma)$, we denote $\overline{S} = V(\Gamma) \setminus S$. Also, we denote $C'_n = C_n - (O_n \cup O_1)$ and $\mathcal{P}'(C_n) = \mathcal{P}(C_n) - (O_n \cup O_1)$. Then in view of the above lemma, we state the following. 

For any poitive integer $n$, $X$ is a cutset $\mathcal{P}(C_n)$ if and only if  $X \setminus (O_n \cup O_1)$ is a cutset of $\mathcal{P}'(C_n)$. Moreover, $X$ is a cyclic cutset $\mathcal{P}(C_n)$ if and only if  $X \setminus (O_n \cup O_1)$ is a cyclic cutset of $\mathcal{P}'(C_n)$.

\begin{lemma}[{\cite{panda2018a}}]
	\label{lem_minimal_cutset}
For any $1 \leq k \leq r$, $\displaystyle Y_k := O_n \cup  \bigcup_{\substack{i=1,\, i \neq k}}^{r} H_{\frac{n}{p_ip_k}}$ is a minimal cutset of $\mathcal{P}(C_n)$. In fact,
$$|Y_k| = \phi(n) + \frac{n}{p_k} - p_k^{\alpha_k-1} \phi \left(\dfrac{n}{p_k^{\alpha_k}} \right).$$
\end{lemma}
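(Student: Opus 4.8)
The plan is to describe the complement $\overline{Y_k}$ explicitly, show it splits into two nonempty parts between which $\mathcal{P}(C_n)$ has no edges, and then check minimality by re-inserting each vertex of $Y_k$. Throughout I use the fact recorded above that two distinct vertices $x,y$ of $\mathcal{P}(C_n)$ are adjacent precisely when $o(x)\mid o(y)$ or $o(y)\mid o(x)$, so that everything reduces to divisibility of orders. Write $n=p_k^{\alpha_k}m$ with $\gcd(p_k,m)=1$, and for a divisor $d\mid n$ write $d=p_k^{\beta}e$ with $e\mid m$.

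First I would identify $\overline{Y_k}$. An element $x$ of order $d=p_k^{\beta}e$ lies outside $Y_k$ iff $d\neq n$ and $d\nmid n/(p_ip_k)$ for every $i\neq k$. Since $n/(p_ip_k)=p_k^{\alpha_k-1}(m/p_i)$, the condition $d\nmid n/(p_ip_k)$ reads ``$\beta=\alpha_k$ or $e\nmid m/p_i$'', and, because the primes dividing $m$ are exactly the $p_i$ with $i\neq k$, the requirement ``$e\nmid m/p_i$ for all $i\neq k$'' is equivalent to $e=m$. Hence $\overline{Y_k}=A\sqcup B$, where $A$ consists of the elements of order $p_k^{\alpha_k}e$ with $e\mid m$, $e\neq m$, and $B$ consists of the elements of order $p_k^{\beta}m$ with $0\le\beta\le\alpha_k-1$; the two sets are disjoint since belonging to both would force order $n$. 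A direct divisibility check shows no order of the first type divides or is divided by an order of the second type, so $\mathcal{P}(C_n)$ has no edges between $A$ and $B$. Moreover $A$ is connected, as every element of $A$ is adjacent to each element of order $p_k^{\alpha_k}$ (which lies in $A$, using $m>1$, i.e.\ $r\ge 2$), and $B$ is a clique because any two of the orders $p_k^{\beta}m$ are comparable under divisibility. Since $O_{p_k^{\alpha_k}}\subseteq A$ and $O_m\subseteq B$ are nonempty, $\mathcal{P}(C_n)-Y_k$ is disconnected with components exactly $A$ and $B$; in particular $Y_k$ is a cutset.

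For minimality I would show that $\mathcal{P}(C_n)-(Y_k\setminus\{y\})$ is connected for every $y\in Y_k$. Its vertex set is $A\cup B\cup\{y\}$, so since $A$ and $B$ are each connected it is enough to exhibit a neighbour of $y$ in $A$ and one in $B$. If $y$ is a generator it is adjacent to every other vertex, so this is clear. Otherwise $y\in H_{n/(p_ip_k)}$ for some $i\neq k$, so writing $o(y)=p_k^{\gamma}f$ we get $\gamma\le\alpha_k-1$ and $f\mid m/p_i$, hence $f\neq m$; then any element of order $p_k^{\gamma}m$ lies in $B$ and is adjacent to $y$ (since $p_k^{\gamma}f\mid p_k^{\gamma}m$), and any element of order $p_k^{\alpha_k}f$ lies in $A$ and is adjacent to $y$ (since $p_k^{\gamma}f\mid p_k^{\alpha_k}f$). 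Both orders occur, so $y$ reconnects $A$ and $B$, and $Y_k$ is minimal.

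Finally, for the cardinality, $O_n$ is disjoint from $U:=\bigcup_{i\neq k}H_{n/(p_ip_k)}$, so $|Y_k|=\phi(n)+|U|$. As each $n/(p_ip_k)$ divides $n/p_k$, we have $U\subseteq H_{n/p_k}\cong C_{p_k^{\alpha_k-1}}\times C_m$, and repeating the divisibility argument inside $H_{n/p_k}$ shows $H_{n/p_k}\setminus U=\{x: m\mid o(x)\}$, a set of size $p_k^{\alpha_k-1}\phi(m)$ (the $C_m$-coordinate must be a generator of $C_m$). Therefore $|U|=\frac{n}{p_k}-p_k^{\alpha_k-1}\phi\!\big(\frac{n}{p_k^{\alpha_k}}\big)$, which yields the claimed value of $|Y_k|$. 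I expect the only real subtlety to be the bookkeeping with prime factorisations — both in pinning down $\overline{Y_k}$ and, in the minimality step, in confirming that the auxiliary orders $p_k^{\gamma}m$ and $p_k^{\alpha_k}f$ really land in $B$ and $A$ — together with the (routine but essential) use of $r\ge 2$, equivalently $m>1$, which keeps $A$ and $B$ nonempty.
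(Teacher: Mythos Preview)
Your proof is correct. The paper itself does not prove this lemma; it is quoted from \cite{panda2018a} and stated without proof, so there is no in-paper argument to compare against. Your decomposition $\overline{Y_k}=A\sqcup B$ (with $A=\{x:\ p_k^{\alpha_k}\mid o(x),\ o(x)\neq n\}$ and $B=\{x:\ m\mid o(x),\ o(x)\neq n\}$) is exactly the natural one, the minimality check via re-inserting a single $y$ and exhibiting neighbours of orders $p_k^{\alpha_k}f$ and $p_k^{\gamma}m$ is clean, and the cardinality computation through $U\subseteq H_{n/p_k}$ and $H_{n/p_k}\setminus U=\{x:m\mid o(x)\}$ is correct and matches the stated formula. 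The only hypothesis you rely on implicitly is the paper's standing assumption $r\ge 2$ (equivalently $m>1$), which you flag at the end; without it $A$ could be empty and $Y_k=O_n$ would not be a cutset.
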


In particular, we have the following lemma.

\begin{lemma}[{\cite{panda2018a}}]\label{lem_alpha}
	For any positive integer $n$, if $r \geq 2$, then $$\kappa(\mathcal{P}(C_n)) \leq \alpha(n): = \phi(n) + \dfrac{n}{p_r} - p_r^{\alpha_{r}-1}\phi\left( \dfrac{n}{p_r^{\alpha_{r}}} \right) .$$
\end{lemma}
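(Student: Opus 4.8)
The statement to prove is Lemma~\ref{lem_alpha}, which asserts that $\kappa(\mathcal{P}(C_n)) \leq \alpha(n) := \phi(n) + \frac{n}{p_r} - p_r^{\alpha_r - 1}\phi(n/p_r^{\alpha_r})$ whenever $r \geq 2$. The plan is to derive this as an immediate specialization of Lemma~\ref{lem_minimal_cutset}.

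First I would recall from Lemma~\ref{lem_minimal_cutset} that for each $1 \leq k \leq r$, the set $Y_k = O_n \cup \bigcup_{i \neq k} H_{n/(p_i p_k)}$ is a (minimal) cutset of $\mathcal{P}(C_n)$, of cardinality $|Y_k| = \phi(n) + \frac{n}{p_k} - p_k^{\alpha_k - 1}\phi(n/p_k^{\alpha_k})$. Since $\kappa(\mathcal{P}(C_n))$ is, by definition, the minimum size of a cutset (note that $\mathcal{P}(C_n)$ is non-complete when $r \geq 2$, so the vertex connectivity genuinely equals the minimum cutset size), we have $\kappa(\mathcal{P}(C_n)) \leq |Y_k|$ for every $k$. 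In particular, taking $k = r$ gives $\kappa(\mathcal{P}(C_n)) \leq |Y_r| = \phi(n) + \frac{n}{p_r} - p_r^{\alpha_r - 1}\phi(n/p_r^{\alpha_r}) = \alpha(n)$, which is exactly the claim.

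The only point that requires a sentence of justification is why $Y_r$ (equivalently, any proper cutset) actually witnesses the vertex connectivity rather than overshooting it: this is because $\mathcal{P}(C_n)$ is not a complete graph when $n$ has at least two distinct prime divisors — for instance, an element of order $p_1$ and an element of order $p_2$ are non-adjacent — so $\kappa(\mathcal{P}(C_n))$ coincides with the minimum cardinality of a vertex cutset, as recorded in the introduction. Hence no cutset can have size below $\kappa(\mathcal{P}(C_n))$, and conversely every cutset size is an upper bound for it.

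I do not anticipate any genuine obstacle here: the lemma is a one-line corollary of Lemma~\ref{lem_minimal_cutset} obtained by substituting $k = r$ and using the definition of vertex connectivity for non-complete graphs. The slightly delicate part, if any, is merely bookkeeping — confirming that the displayed formula for $|Y_k|$ with $k = r$ matches the definition of $\alpha(n)$ verbatim — but this is a direct symbolic comparison rather than a computation.
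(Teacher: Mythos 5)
Your proof is correct and matches the paper's treatment: the paper presents this lemma as an immediate particular case of Lemma~\ref{lem_minimal_cutset}, exactly as you do by taking $k = r$ and noting that $\mathcal{P}(C_n)$ is non-complete for $r \geq 2$, so the cutset $Y_r$ gives the stated upper bound on $\kappa(\mathcal{P}(C_n))$.
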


It was further shown in \cite{panda2018a} (also, see \cite{chattopadhyay2019}) that if $n$ has exactly two prime factors and $k = r$, then $Y_k$ is a minimum cutset of $\mathcal{P}(C_n)$. So its cardinality is the vertex connectivity of $\mathcal{P}(C_n)$, as stated below.

\begin{lemma}[{\cite{panda2018a}}]
	\label{lem2}
	If $ r=2 $, then $Y_1 = Y_2$ and it is a minimum cutset of $\mathcal{P}(C_n)$. In particular, 
	$\kappa(\mathcal{P}(C_n)) = |Y_1| = \phi(n) + p^{\alpha_1-1}q^{\alpha_2-1}$.
\end{lemma}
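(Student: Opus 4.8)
The statement has two parts: the identity $Y_1=Y_2$ with its cardinality, and the assertion that $Y_1$ is a \emph{minimum} cutset. The first part is immediate: for $r=2$ the union $\bigcup_{i\ne k}H_{n/(p_ip_k)}$ in the definition of $Y_k$ is the single subgroup $H_{n/(p_1p_2)}$, which does not depend on $k$, so $Y_1=Y_2=O_n\cup H_{n/(p_1p_2)}$; since $H_{n/(p_1p_2)}$ contains no element of order $n$, this union is disjoint and $|Y_1|=\phi(n)+n/(p_1p_2)=\phi(n)+p^{\alpha_1-1}q^{\alpha_2-1}$, writing $p=p_1$ and $q=p_2$ (this also follows by putting $r=2$ in the formula of Lemma~\ref{lem_minimal_cutset}).

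By Lemma~\ref{lem_minimal_cutset}, $Y_1$ is a cutset of $\mathcal{P}(C_n)$, so $\kappa(\mathcal{P}(C_n))\le|Y_1|$, and the real task is the reverse inequality. Let $X$ be an arbitrary cutset of $\mathcal{P}(C_n)$. By the lemma asserting $O_n\cup O_1\subseteq X$ for every cutset, and the ensuing equivalence between cutsets of $\mathcal{P}(C_n)$ and of $\mathcal{P}'(C_n)$, the set $X':=X\setminus(O_n\cup O_1)$ is a cutset of $\mathcal{P}'(C_n)$ with $|X|=\phi(n)+1+|X'|$; so it suffices to prove $|X'|\ge n/(p_1p_2)-1$ for every cutset $X'$ of $\mathcal{P}'(C_n)$. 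I would set this up by noting that a divisor $d$ of $n$ divides $n/p_1$ exactly when $p_1^{\alpha_1}\nmid d$; hence $C'_n$ is the disjoint union of $M:=H_{n/p_1}\setminus\{1\}$ and $W:=\{x\in C'_n:\ \text{o}(x)=p_1^{\alpha_1}p_2^{b},\ 0\le b\le\alpha_2-1\}$. Fixing an element $u$ of order $n/p_1$, every element of $M$ is a power of $u$, so $O_{n/p_1}$ is a clique and $M$ induces a connected subgraph of $\mathcal{P}'(C_n)$; similarly any two elements of $W$ have comparable orders, so $W$ induces a clique. A routine estimate (using only $p_2\ge 3$) gives $|O_{n/p_1}|=\phi(n/p_1)>n/(p_1p_2)-1$, so if $O_{n/p_1}\subseteq X'$ we are done; otherwise pick $u\in O_{n/p_1}\setminus X'$ and let $D$ be its component in $\mathcal{P}'(C_n)-X'$.

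Then $D\supseteq M\setminus X'$. Since $X'$ is a cutset, some vertex lies outside $D\cup X'$; it cannot lie in $M$, hence lies in $W$. As $W\setminus X'$ is a clique it lies in a single component, which is not $D$, so $W\setminus X'$ is disjoint from $D$; consequently $D=M\setminus X'$ and the remaining components together form $W\setminus X'$. Let $b^{*}$ be the largest $b$ with $O_{p_1^{\alpha_1}p_2^{b}}\not\subseteq X'$. Then $O_{p_1^{\alpha_1}p_2^{j}}\subseteq X'$ for all $b^{*}<j\le\alpha_2-1$; and fixing $t\in W\setminus X'$ of order $p_1^{\alpha_1}p_2^{b^{*}}$, every element whose order is $p_1^{c}p_2^{d}$ with $c\le\alpha_1-1$ and $d\le b^{*}$ is a power of $t$ yet lies in $M$, so it is adjacent to $t\notin D$ and therefore must lie in $X'$; that is, $H_{p_1^{\alpha_1-1}p_2^{b^{*}}}\setminus\{1\}\subseteq X'$. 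These two families are disjoint (different $p_1$-adic valuations), so, using $\sum_{j=0}^{m}\phi(p_2^{j})=p_2^{m}$ and $p_1-1\ge 1$,
\[
|X'|\ \ge\ \bigl(p_1^{\alpha_1-1}p_2^{b^{*}}-1\bigr)+p_1^{\alpha_1-1}(p_1-1)\bigl(p_2^{\alpha_2-1}-p_2^{b^{*}}\bigr)\ \ge\ p_1^{\alpha_1-1}p_2^{\alpha_2-1}-1\ =\ \frac{n}{p_1p_2}-1.
\]
Combining, $|X|\ge\phi(n)+n/(p_1p_2)=|Y_1|$, so $\kappa(\mathcal{P}(C_n))=|Y_1|=\phi(n)+p^{\alpha_1-1}q^{\alpha_2-1}$ and $Y_1$ is a minimum cutset.

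The routine ingredients are the cardinality computation, the estimate $\phi(n/p_1)>n/(p_1p_2)-1$, and the closing inequality (which is tight when $p_1=2$, where $p_1-1=1$, and slack otherwise, but in every case reaches $n/(p_1p_2)-1$). The crux — and the step I expect to be the main obstacle — is the structural claim that pins down the components of $\mathcal{P}'(C_n)-X'$: that the component of an element of order $n/p_1$ is exactly $M\setminus X'$ and that everything else sits inside the clique $W$. Once that is established, the lower bound for $|X'|$ is forced by counting the two disjoint families above.
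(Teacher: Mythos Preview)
The paper does not prove this lemma; it is quoted from \cite{panda2018a} without argument, so there is no in-paper proof to compare against. Your proof is correct and self-contained. The decomposition $C'_n=M\cup W$ with $M=H_{n/p_1}\setminus\{1\}$ and $W=\bigcup_{b=0}^{\alpha_2-1}O_{p_1^{\alpha_1}p_2^{b}}$ is exactly the right one for $r=2$; the structural step pinning down the component of $u$ as $M\setminus X'$ is argued cleanly (using that $M$ is dominated by $u$ and that $W$ is a clique), and the closing inequality
\[
(p_1^{\alpha_1-1}p_2^{b^{*}}-1)+p_1^{\alpha_1-1}(p_1-1)(p_2^{\alpha_2-1}-p_2^{b^{*}})\ \ge\ p_1^{\alpha_1-1}p_2^{\alpha_2-1}-1
\]
reduces to $(p_1-2)(p_2^{\alpha_2-1}-p_2^{b^{*}})\ge0$, which holds with equality precisely when $p_1=2$ or $b^{*}=\alpha_2-1$. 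Two small points worth making explicit in a final write-up: the estimate $\phi(n/p_1)>n/(p_1p_2)-1$ needs the separate (easy) check when $\alpha_1=1$, and the existence of $b^{*}$ is guaranteed because your argument already forces $W\setminus X'\ne\emptyset$ (otherwise $\mathcal{P}'(C_n)-X'$ would be the single connected set $M\setminus X'$, contradicting that $X'$ is a cutset).
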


\section{Connectivities}

We begin with the following observation about isomorphic graphs.

\begin{lemma}
If the graphs $\Gamma_1$ and $\Gamma_2$ are isomorphic, then $\Gamma_1$ is cyclically separable if and only if $\Gamma_2$ is cyclically separable.
\end{lemma}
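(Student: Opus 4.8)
The plan is to verify directly from the definition that cyclic separability is preserved under graph isomorphism, which is essentially bookkeeping: one shows that an isomorphism carries a cyclic vertex cutset of $\Gamma_1$ to a cyclic vertex cutset of $\Gamma_2$, and since the inverse map is also an isomorphism, the equivalence follows.

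First I would let $f : V(\Gamma_1) \to V(\Gamma_2)$ be a graph isomorphism and assume $\Gamma_1$ is cyclically separable, so there is a cyclic vertex cutset $S \subseteq V(\Gamma_1)$; that is, $\Gamma_1 - S$ is disconnected and has at least two components $A_1, A_2$ each containing a cycle. I would then claim $f(S)$ is a cyclic vertex cutset of $\Gamma_2$. The key points to check are: (i) $f$ restricts to an isomorphism $\Gamma_1 - S \to \Gamma_2 - f(S)$, which holds because $f$ is a bijection preserving adjacency and non-adjacency in both directions, and $V(\Gamma_2) \setminus f(S) = f(V(\Gamma_1) \setminus S)$ since $f$ is a bijection; (ii) an isomorphism maps connected components bijectively onto connected components, so $\Gamma_2 - f(S)$ is disconnected with components $f(A_1), f(A_2), \ldots$; and (iii) a cycle in $\Gamma_1$ (a sequence of distinct vertices $v_1, v_2, \ldots, v_\ell, v_1$ with consecutive ones adjacent, $\ell \geq 3$) maps under $f$ to a cycle $f(v_1), \ldots, f(v_\ell), f(v_1)$ of the same length, again because $f$ is an injective adjacency-preserving map. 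Hence $f(A_1)$ and $f(A_2)$ are two components of $\Gamma_2 - f(S)$, each containing a cycle, so $f(S)$ is a cyclic vertex cutset and $\Gamma_2$ is cyclically separable.

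For the converse, I would simply observe that $f^{-1} : V(\Gamma_2) \to V(\Gamma_1)$ is also a graph isomorphism, so the argument above applied to $f^{-1}$ shows that if $\Gamma_2$ is cyclically separable then so is $\Gamma_1$. This completes the proof.

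There is no real obstacle here; the statement is routine and the only thing to be careful about is stating precisely why each of the three structural features in the definition of a cyclic vertex cutset---the induced subgraph on the complement of the cutset, its component decomposition, and the presence of cycles in components---is invariant under an isomorphism. In the write-up I would keep this brief, perhaps just asserting that isomorphisms preserve each of these notions and pointing to the bijectivity and adjacency-preservation of $f$, rather than spelling out every verification.
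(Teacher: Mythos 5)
Your proof is correct and is exactly the routine verification the paper has in mind: the paper states this lemma as an immediate observation and omits the proof entirely, since isomorphisms clearly preserve cutsets, component structure, and cycles. Nothing to add.
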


We first recall the cyclic separability and the equality of vertex connectivity and cyclic vertex connectivity of power graphs of finite $p$-groups. Hereafter, we refer to cyclic vertex cutsets as simply cyclic cutsets.

\begin{theorem}[{\cite{panda2024}}]\label{thm_p_group}
	For any finite $p$-group $G$, $\mathcal{P}(G)$ is cyclically separable if and only if $G$ satisfies one of the following conditions:
		\begin{enumerate}[\rm(i)]
			\item $p>3$ and $G$ is non-cyclic,
			\item $p=3$ and $G$ has at least two maximal cyclic subgroups of order greater than $3$,
			\item $p=2$ and $G$ has at least two maximal cyclic subgroups of order greater than $4$, or that $G$ has at least two maximal cyclic subgroups of order greater than $2$ with trivial intersection.
		\end{enumerate}
\end{theorem}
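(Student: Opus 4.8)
The plan is to reduce the question to the behaviour of cliques sitting inside maximal cyclic subgroups, and then settle the two implications separately.

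\emph{A reduction.} I would first prove: for any finite $p$-group $G$ and any $S\subseteq G$, a component of $\mathcal{P}(G)-S$ contains a cycle if and only if there is a maximal cyclic subgroup $M$ of $G$ with $|M\setminus S|\ge 3$ and $M\setminus S$ contained in that component. The ``if'' part is clear: since $M$ is cyclic of prime-power order, $\mathcal{P}(G)[M]$ is complete, so $M\setminus S$ induces a clique on at least three vertices, which contains a triangle and lies in one component. For the converse, take a cycle $v_1v_2\cdots v_mv_1$ ($m\ge 3$) in the component; if all $v_i$ generate the same cyclic subgroup $C$ then $\{v_1,\dots,v_m\}\subseteq C\setminus S$ is a clique of size $\ge 3$; otherwise choose $i$ so that $\langle v_i\rangle$ has largest order among the $\langle v_j\rangle$, note that comparability of subgroups forces $v_{i-1},v_{i+1}\in\langle v_i\rangle$, so the three distinct elements $v_{i-1},v_i,v_{i+1}$ form a triangle inside the cyclic subgroup $\langle v_i\rangle$. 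Either way one obtains a cyclic subgroup $C$ with $|C\setminus S|\ge 3$ whose surviving part lies in the component, and $M\setminus S$ is again a clique for any maximal cyclic $M\supseteq C$. It follows that $S$ is a cyclic cutset of $\mathcal{P}(G)$ exactly when $\mathcal{P}(G)-S$ is disconnected and there are two distinct maximal cyclic subgroups $M_1,M_2$ with $|M_i\setminus S|\ge 3$ lying in different components; I will use this reformulation throughout.

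\emph{Sufficiency.} For each of (i)--(iii) I would exhibit such an $S$. In (i), since $p\ge 5$ and $G$ is non-cyclic, $G$ has two distinct subgroups $L_1,L_2$ of order $p$ (a standard fact, as $p$ is odd), and I take $S=G\setminus\big((L_1\cup L_2)\setminus\{e\}\big)$, so $\mathcal{P}(G)-S$ is the disjoint union of the two cliques $L_i\setminus\{e\}$, each of size $p-1\ge 3$. In (ii) and the first alternative of (iii), I pick two distinct maximal cyclic subgroups $M_1,M_2$ of order $\ge 9$, resp.\ $\ge 8$, put $I=M_1\cap M_2$ (a proper subgroup of each, since $M_1\ne M_2$ are maximal cyclic), and take $S=G\setminus\big((M_1\cup M_2)\setminus I\big)$, so $\mathcal{P}(G)-S$ is the disjoint union of the cliques $M_i\setminus I$, of sizes $\ge \tfrac{2}{3}|M_i|\ge 6$, resp.\ $\ge\tfrac{1}{2}|M_i|\ge 4$. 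In the second alternative of (iii), I pick maximal cyclic $M_1,M_2$ of order $\ge 4$ with $M_1\cap M_2=\{e\}$ and take $S=G\setminus\big((M_1\cup M_2)\setminus\{e\}\big)$, getting two cliques of size $\ge 3$. In every case, an edge from $x\in M_1$ to $y\in M_2$ would force one of $\langle x\rangle,\langle y\rangle$ into $M_1\cap M_2$, so there are no edges between the two parts and $S$ is a cyclic cutset.

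\emph{Necessity.} I would argue the contrapositive. If $G$ fails all of (i)--(iii) and is cyclic, then $\mathcal{P}(G)$ is complete and not cyclically separable; so assume $G$ is non-cyclic, which forces $p\in\{2,3\}$. For $p=3$: failure of (ii) says $G$ has at most one maximal cyclic subgroup of order $\ge 9$, and the structure fact below then forces $G$ to have exponent $3$, so every maximal cyclic subgroup has order $3$; by the reduction, a cyclic cutset would need a maximal cyclic $M$ with $|M\setminus S|\ge 3$, i.e.\ $M\cap S=\emptyset$, hence $e\notin S$, impossible for a cutset since $e$ is universal. For $p=2$: if $G$ has exponent $2$ then $\mathcal{P}(G)$ is a star and has no cycle; otherwise $G$ has an element of order $4$, and failure of the second alternative of (iii) means no two maximal cyclic subgroups of order $\ge 4$ intersect trivially, so, as each such subgroup has a unique subgroup of order $2$, all of them contain one common subgroup $\langle z\rangle$ of order $2$. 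Now if $S$ were a cyclic cutset, the reduction gives distinct maximal cyclic $M_1,M_2$ with $|M_i\setminus S|\ge 3$ in different components, whence $|M_i|\ge 4$ and $z\in M_1\cap M_2$; since $z$ sits in a single component, $z\in S$; failure of the first alternative of (iii) forces $|M_i|=4$ for some $i$, so $M_i=\{e,w,z,wz\}$, and $|M_i\setminus S|\ge 3$ together with $z\in S$ forces $e\notin S$ --- again impossible. Hence $\mathcal{P}(G)$ is not cyclically separable in any of these cases.

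\emph{Where the work is.} The reduction and the sufficiency constructions are routine; the crux is the structure fact underpinning necessity for odd $p$: a non-cyclic $p$-group with $p$ odd never has a unique maximal cyclic subgroup of order $\ge p^2$. I would prove this by showing that such an $M=\langle a\rangle$ must be normal (its conjugates are maximal cyclic of the same order, hence equal to it), every element outside $M$ has order $p$, and for $y\notin M$ of order $p$ with $yay^{-1}=a^s$ one has $s\equiv 1\pmod{p^{k-1}}$ where $|M|=p^k$, so a short computation gives $(ay)^p=a^p$; then $ay$ has order $p^k$ and generates a maximal cyclic subgroup distinct from $M$, a contradiction. This argument genuinely breaks down for $p=2$ (there $s\equiv-1$ is possible, realised by dihedral, semidihedral, and generalized quaternion groups), which is exactly why the $p=2$ part of the characterization looks different and must instead be driven by the common involution $z$; extracting that involution from the hypothesis and organizing the $p=2$ case correctly is the other delicate point.
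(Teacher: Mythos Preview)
This theorem is not proved in the present paper; it is quoted from \cite{panda2024} and used as a black box, so there is no in-paper argument to compare your proposal against.

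Your proposal is nonetheless a correct and self-contained proof. The reduction (a component of $\mathcal{P}(G)-S$ contains a cycle iff some maximal cyclic $M$ has $|M\setminus S|\ge 3$ inside it) is valid and is the right organizing device; the sufficiency constructions are the natural ones and work as stated. For necessity, the $p=2$ case via the common involution $z$ is correctly argued (the pairwise sharing of involutions globalizes because each cyclic $2$-group of order $\ge 4$ has a \emph{unique} involution), and the $p=3$ case rests on the structural lemma that a non-cyclic odd-$p$ group cannot have exactly one maximal cyclic subgroup of order $\ge p^2$. Your sketch of that lemma --- $M$ is normal, the conjugation exponent satisfies $s\equiv 1\pmod{p^{k-1}}$, hence $1+s+\cdots+s^{p-1}\equiv p\pmod{p^k}$ for odd $p$, so $(ay)^p=a^p$ and $ay$ generates a second cyclic subgroup of order $p^k$ not contained in $M$ --- is correct, and you rightly note that the computation collapses for $p=2$ because $s\equiv -1$ becomes available.
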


\begin{theorem}[{\cite{panda2024}}]\label{mainthm2}
		Let $G$ be a finite $p$-group. Then $ \kappa(\mathcal{P}(G)) = c\kappa(\mathcal{P}(G)) $ if and only if $G$ satisfies one of the following conditions:
		\begin{enumerate}[\rm(i)]
			\item $p>3$ and $G$ is non-cyclic,
			\item $p \in \{2,3\}$ and $G$ has at least two maximal cyclic subgroups of order greater than $p$ with trivial intersection.
		\end{enumerate}
	\end{theorem}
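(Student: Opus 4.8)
The plan is to use that every cyclic cutset is in particular a cutset, so $\kappa(\mathcal P(G))\le c\kappa(\mathcal P(G))$ always, whence $\kappa(\mathcal P(G))=c\kappa(\mathcal P(G))$ if and only if \emph{some minimum cutset of $\mathcal P(G)$ is a cyclic cutset}. If $G$ is cyclic then $\mathcal P(G)$ is complete (the subgroups of $C_{p^k}$ form a chain), so $c\kappa(\mathcal P(G))=\infty$ while $\kappa(\mathcal P(G))$ is finite; hence equality forces $G$ non-cyclic, which I assume henceforth. I would also record three facts used throughout: (a) the identity $e$ is adjacent to every other vertex of $\mathcal P(G)$; (b) every cyclic $p$-subgroup induces a clique in $\mathcal P(G)$; (c) $\mathcal P(G)$, and more generally $\mathcal P(G)-\{e\}$, has girth $3$ or no cycle at all, proved by taking a shortest cycle and using (b) to find a chord — here the $p$-group hypothesis is essential because the subgroups of a cyclic $p$-group are totally ordered.

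The next step is a reduction via minimal subgroups. A finite $p$-group has a unique minimal subgroup exactly when it is cyclic or a generalized quaternion $2$-group, so a non-cyclic $G$ either (A) has at least two minimal subgroups, or (B) is a generalized quaternion $2$-group. By (a) the only possible cut vertex of $\mathcal P(G)$ is $e$. In case (A), I claim $\{e\}$ is a cutset: if $Z_1\ne Z_2$ are distinct minimal subgroups, then a path in $\mathcal P(G)-\{e\}$ between generators of $Z_1$ and $Z_2$ would run through a chain of cyclic subgroups with consecutive nontrivial intersections, and since each cyclic subgroup contains a \emph{unique} minimal subgroup, an induction forces $Z_1\subseteq Z_2$, a contradiction. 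So in case (A), $\{e\}$ is the \emph{unique} minimum cutset, $\kappa(\mathcal P(G))=1$, and the question becomes: do at least two components of $\mathcal P(G)-\{e\}$ contain a cycle? In case (B), letting $z$ be the unique involution, both $e$ and $z$ dominate $\mathcal P(G)$, so $\{e,z\}$ is the unique minimum cutset, $\kappa(\mathcal P(G))=2$, and $\mathcal P(G)-\{e,z\}$ has at most one component containing a cycle (all other components being single edges coming from the order-$4$ cyclic subgroups); hence $\{e,z\}$ is never a cyclic cutset, so $\kappa(\mathcal P(G))\ne c\kappa(\mathcal P(G))$ here — consistent, since generalized quaternion groups satisfy neither (i) nor (ii).

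To handle case (A) I would prove two short lemmas. \emph{Structural criterion:} a component $\mathcal D$ of $\mathcal P(G)-\{e\}$ contains a cycle if and only if $M\setminus\{e\}\subseteq\mathcal D$ for some maximal cyclic subgroup $M$ with $|M|\ge 4$; indeed, by (c) a cycle yields a triangle, whose three vertices generate pairwise-comparable subgroups, i.e.\ a chain whose largest member has order at least $4$ and lies in a maximal cyclic subgroup meeting $\mathcal D$, while conversely $M\setminus\{e\}$ with $|M|\ge 4$ is a clique on at least three vertices. \emph{Separation criterion:} if $M_1,M_2$ are maximal cyclic subgroups with $M_1\cap M_2=\{e\}$, then they lie in different components of $\mathcal P(G)-\{e\}$ — this is the contrapositive of the induction above, since a connecting path would force their (unique) minimal subgroups to coincide.

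Finally I would assemble the cases. For $p>3$ every maximal cyclic subgroup has order at least $p\ge5>4$, so by the structural criterion every component of $\mathcal P(G)-\{e\}$ contains a cycle; there are at least two components (case (A) always holds for odd $p$), so $\{e\}$ is a cyclic cutset and $\kappa(\mathcal P(G))=c\kappa(\mathcal P(G))=1$, giving (i). For $p\in\{2,3\}$ in case (A), $\{e\}$ is a cyclic cutset if and only if at least two components each contain a maximal cyclic subgroup of order $\ge4$, i.e.\ of order $>p$ (orders being powers of $p\le3$); by the separation criterion, together with the fact that two maximal cyclic subgroups in different components automatically intersect trivially, this is equivalent to the existence of two maximal cyclic subgroups of order $>p$ with trivial intersection — exactly condition (ii). I expect the main obstacle to be the structural criterion when $p\le3$: one must rule out triangles "spread across" several small subgroups, which is where the girth argument and the totally ordered subgroup lattice of cyclic $p$-groups are indispensable; once that and the (A)/(B) dichotomy are in place, the rest is bookkeeping.
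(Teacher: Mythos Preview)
The paper does not prove this theorem; it is quoted from \cite{panda2024} as background, so there is no in-paper argument to compare against.

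Assessing your sketch on its own merits: the architecture is correct and would yield a complete proof. The dichotomy (A) at least two minimal subgroups versus (B) generalized quaternion is exactly the right reduction for non-cyclic $p$-groups; in (A) you correctly identify $\{e\}$ as the unique minimum cutset and reduce the question to whether at least two components of $\mathcal P(G)-\{e\}$ contain cycles, and in (B) you correctly dispose of generalized quaternion groups (where every nontrivial subgroup contains the involution, so condition (ii) fails and the unique minimum cutset $\{e,z\}$ leaves only $K_2$'s outside the large cyclic piece). The structural and separation criteria are both valid, and the translation ``order $\ge 4$'' $\Leftrightarrow$ ``order $>p$'' for $p\in\{2,3\}$ is precisely what links the component analysis to condition (ii).

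The one step that needs an extra sentence is the girth/chord argument in (c). To guarantee a chord in a cycle of length $\ge 4$, choose a vertex $v_i$ on the cycle with $|\langle v_i\rangle|$ \emph{maximal}; then $\langle v_{i-1}\rangle,\langle v_{i+1}\rangle\subseteq\langle v_i\rangle$, and since the subgroups of the cyclic $p$-group $\langle v_i\rangle$ form a chain, $v_{i-1}\sim v_{i+1}$. Your phrase ``using (b) to find a chord'' hides that the choice of vertex matters: picking a vertex of \emph{minimal} order leaves you in the case $\langle v_i\rangle\subseteq\langle v_{i-1}\rangle\cap\langle v_{i+1}\rangle$, from which comparability of $\langle v_{i-1}\rangle$ and $\langle v_{i+1}\rangle$ does not follow. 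Alternatively, you can bypass girth entirely for the forward direction of the structural criterion: if every maximal cyclic subgroup meeting a component $\mathcal D$ has order $p\le 3$, then every vertex of $\mathcal D$ has order $p$, two such vertices are adjacent only when they generate the same subgroup, and hence $\mathcal D\cong K_{p-1}$, which is acyclic for $p\le 3$.
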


\subsection{Cyclic groups}

In this subsection, we consider first the cyclic separability and then the equality of vertex connectivity and cyclic vertex connectivity of power graphs of finite cyclic groups.

\begin{theorem}
	\label{thm1} 
For any positive integer $n$, $\mathcal{P}(C_n)$ is cyclically separable if and only if the following conditions hold:
\begin{enumerate}[\rm(i)]
\item $n$ has at least two prime factors,
\item $n \neq p_1p_2$ for any primes $p_1 < p_2$ such that $p_1 \in \{2,3\}$,
\item $n \neq 12$.
\end{enumerate} 
\end{theorem}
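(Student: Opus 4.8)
The plan is to work entirely inside the reduced graph $\mathcal{P}'(C_n)$, since by the remarks following Lemma~1.1 the graph $\mathcal{P}(C_n)$ is cyclically separable if and only if $\mathcal{P}'(C_n)$ admits a cyclic cutset. I would split the argument into a sufficiency direction (constructing an explicit cyclic cutset when (i)--(iii) hold) and a necessity direction (showing no cyclic cutset can exist when any of (i)--(iii) fails). Throughout, the workhorse is the structure of $\mathcal{P}'(C_n)$ in terms of the blocks $O_d$ (for proper divisors $d$ with $1 < d < n$), together with the adjacency rule that $O_c$ and $O_d$ are completely joined precisely when $c \mid d$ or $d \mid c$, and the fact that each $O_d$ is a clique of size $\phi(d)$. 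A component of $\mathcal{P}'(C_n) - X$ contains a cycle iff it contains either some $O_d$ with $\phi(d) \geq 3$, or two blocks $O_c, O_d$ with $c \mid d$ (or $d \mid c$) whose sizes sum to at least $3$; so "containing a cycle" is essentially a statement about which order-classes survive in that component.

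For sufficiency, assume $n$ has at least two prime factors, $n$ is not $2q$, $3q$, or $12$. The natural candidate cutset is one of the minimal cutsets $Y_k$ from Lemma~1.4, or a slight variant, chosen so that $\mathcal{P}(C_n) - Y_k$ splits into two pieces each retaining enough elements to carry a cycle. Concretely, for $r \geq 2$ one piece will contain blocks $O_d$ with $d$ a multiple of $p_k$ but not of certain other primes, and the complementary piece the remaining proper-order blocks; I would verify that under the excluded-order hypotheses each piece contains a divisor $d$ with $\phi(d) \geq 3$ or a divisor chain of total size $\geq 3$. The cases to handle separately are when $r = 2$ with small $\alpha_1, \alpha_2$ (so that the number of proper divisors is small) — e.g. $n = p^a q^b$ — and here one checks directly that once we are past $2q$, $3q$, and $12$ there is always room; when $r \geq 3$ there is an abundance of blocks and essentially any minimal cutset works, so the construction is easy.

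For necessity, I treat the three failure modes. If $n$ is a prime power, $\mathcal{P}(C_n)$ is a complete graph (a chain of nested subgroups), hence has no cutset at all, so (i) is needed. If $n = 2q$, then $C'_n$ has order $q - 1$ and its nontrivial proper blocks are just $O_2$ (size $1$) and $O_q$ (size $\phi(q) = q-1$), which are completely joined; removing any cutset to disconnect this must destroy one side entirely, and the side that survives is a clique — a clique minus vertices is still connected, so no cutset gives \emph{two} components with cycles. Similarly $n = 3q$: the relevant blocks are $O_3$ (size $2$) and $O_q$, again completely joined, and one checks the same obstruction. For $n = 12$, one enumerates the blocks $O_2, O_3, O_4, O_6$ of sizes $1,2,2,2$ inside $\mathcal{P}'(C_{12})$, works out the adjacencies ($O_4$ joined to $O_2$; $O_6$ joined to $O_2, O_3$; $O_3$ not joined to $O_4$), and checks by hand that every cutset leaves at most one component containing a cycle — this is a finite verification. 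The main obstacle is the sufficiency direction in the tight $r = 2$ cases: I expect the delicate point is pinning down exactly when a two-divisor chain of total size $3$ (e.g. an $O_c$ of size $1$ attached to an $O_d$ of size $2$) is available on \emph{both} sides of the chosen cut, which is precisely what the exclusions $2q$, $3q$, $12$ are calibrated to guarantee, and getting the boundary cases right (smallest admissible $n$ for each shape of factorization) will require care rather than cleverness.
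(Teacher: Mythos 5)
Your reduction to $\mathcal{P}'(C_n)$ and the decomposition into the cliques $O_d$ with divisibility adjacency is exactly the paper's framework, and your treatment of prime powers and of $n=12$ is fine in outline. But there are two genuine gaps. The larger one is that the sufficiency direction, which is the bulk of the paper's proof, is never actually executed: you exhibit and verify no cyclic cutset, and the tool you propose --- a minimal cutset $Y_k$ from Lemma~\ref{lem_minimal_cutset} ``or a slight variant'' --- fails in precisely the tight case you flag. For $n=4q$ with $q\ge 5$ (which satisfies (i)--(iii) and so must be shown cyclically separable), the cutset of Lemma~\ref{lem2} is $Y_1=O_n\cup O_1\cup O_2$, and deleting it leaves the components $O_4$ (two vertices, hence no cycle) and $O_{2q}\cup O_q$; indeed no minimum cutset of $\mathcal{P}(C_{4q})$ is cyclic (this is exactly why $4p$ is excluded in the later equality theorem), yet the graph is cyclically separable. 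The paper's constructions have a different shape: it deletes \emph{everything except} two chosen cliques (for $r\ge 3$ it keeps $H_{p_1p_2}^{*}$ and $O_{p_3}$; for $n=p_1^{\alpha_1}p_2^{\alpha_2}$ with $p_1\ge 5$ it keeps $O_{p_1}$ and $O_{p_2}$), and for $p_1\in\{2,3\}$ it runs a case analysis on $(\alpha_1,\alpha_2)$ with explicit cutsets, e.g.\ for $n=p_1^2p_2$, $p_2\ge 5$, it cuts along $O_{p_1p_2}$ so that $O_{p_1}\cup O_{p_1^{2}}$ (a clique of size $p_1^{2}-1\ge 3$) and $O_{p_2}$ survive, with separate constructions for $\alpha_1\ge 3,\ \alpha_2=1$ and for $\alpha_2\ge 2$. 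Some such explicit case-by-case construction has to replace your sketch.

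Second, your necessity argument for $n=2q$ and $n=3q$ rests on a false structural claim: you assert that $O_2$ (resp.\ $O_3$) and $O_q$ are \emph{completely joined}, but by the adjacency rule you yourself state this would require $2\mid q$ or $q\mid 2$; in fact these blocks are non-adjacent, so $\mathcal{P}'(C_n)$ is already disconnected (were they joined, $\mathcal{P}(C_n)$ would be complete, contradicting Lemma~\ref{lem2}). The correct obstruction, which is the paper's, is that after deleting any cutset every surviving component lies either inside $O_{p_1}$, which has at most two vertices and so never carries a cycle, or inside the clique $O_q$, whose surviving part is a single component; hence at most one component of $\mathcal{P}(C_n)-X$ can contain a cycle. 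Your conclusion is correct, but the step as written is wrong (also $|C'_{2q}|=q$, not $q-1$).
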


\begin{proof}
Let $n$ have at least three prime factors, namely $p_1 < p_2 < p_3$. Then $p_1 \geq 2 $, $ p_2 \geq 3 $, and $p_3 \geq 5$. 
Note that no element of $H_{p_1p_2}$ is adjacent to any element of $O_{p_3}$ in $\mathcal{P}(C_n)$.
 So for $S = H_{p_1p_2}^* \cup O_{p_3}$, $\overline{S}$ is cut-set of $\mathcal{P}(C_n)$. Moreover, $O_{p_3}$ induces a clique in $\mathcal{P}(C_n)$ and $|O_{p_3}| = \phi( p_3 ) \geq 4$. Hence, the component of $\mathcal{P}(C_n) - \overline{S}$ induced by $O_{p_3}$ contains a cycle. Next, $O_{p_1p_2}$ is a clique and $|O_{p_1p_2}| = \phi(p_1p_2) = \phi(p_1)\phi(p_2) \geq 2$. As $O_{p_1p_2}$ is the set of generators of $ H_{p_1p_2}$, $a \sim b$ for all $a \in O_{p_1p_2}$  and $b \in O_{p_1}$. So $O_{p_1p_2} \cup O_{p_1}$ is a clique in $\mathcal{P}(C_n)$ and $|O_{p_1p_2} \cup O_{p_1}| = \phi(p_1p_2)+\phi(p_2) \geq 4$. Hence,  the component of $\mathcal{P}(C_n) - \overline{S}$ induced by $ H_{p_1p_2}^* $ contains a cycle. Thus, $\mathcal{P}(C_n)$ is cyclically separable.
 
 Now, let $n$ have exactly two prime factors; that is $n=p_1^{\alpha_1}p_2^{\alpha_2}$ for primes $p_1 < p_2$ and positive integers $\alpha_1$ and $\alpha_2$. Note that $p_1 \geq 2$ and $ p_2 \geq 3$. If $p_1 \geq 5$, then both $O_{p_1}$ and $O_{p_2}$ are cliques of size at least $4$ in $\mathcal{P}'(C_n)$. Additionally, $a \nsim b$ for all $a \in O_{p_1}$  and $b \in O_{p_2}$. Hence, taking $S = O_{p_1} \cup O_{p_2}$, $\overline{S}$ becomes a cyclic cutset of $\mathcal{P}'(C_n)$. Next we assume that $p_1 \in \{2,3\}$. Then we have the following cases.
 
 \medskip
 
 \noindent
 {\bf Case  1.}  $ \alpha_1 = 1,$ $\alpha_2 =1$. Then $C'_n = O_{p_1} \cup O_{p_2}$   and hence $\mathcal{P}'(C_n)$ is disconnected. However, $|O_{p_1}| = p_1 -1 \leq 2$. Hence, $\mathcal{P}(C_n)$ has no cyclic cutset.
 
    \medskip

    \noindent
    {\bf Case  2.} $ \alpha_1 = 2,$ $\alpha_2 =1$. Then $n = p_1^2 \cdot p_2$ and $C'_n = O_{p^2_1} \cup O_{p_1} \cup O_{p_1 \cdot p_2} \cup O_{p_2}$. Note that $O_{{p^2_1}}$, $O_{p_1} $, $ O_{p_1 \cdot p_2} $, and  $ O_{p_2}$ are cliques of size $p_1(p_1-1)$, $p_1-1$, $(p_1-1)(p_2-1)$, and $p_2-1$, respectively. The adjacency relation between the elements of these four sets in $\mathcal{P}'(C_n)$ can be visualized in the following figure.
    
  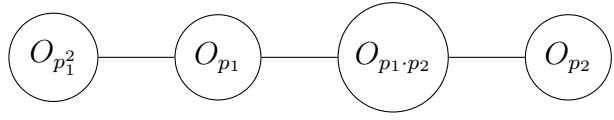
\begin{figure}[h]
  	
  	\begin{center}
 \begin{tikzpicture}[
 	roundnode/.style={circle, draw=black},
 	squarednode/.style={rectangle, draw=red!60, fill=red!5, very thick, minimum size=5mm},
 	]
 	\node[roundnode]      (maintopic)                              {$O_{p_1 \cdot p_2}$};
 	\node[roundnode]        (uppercircle)       [left=of maintopic] {$O_{p_1}$};
 	\node[roundnode]      (rightsquare)       [right=of maintopic] {$O_{p_2}$};
 	\node[roundnode]        (lowercircle)       [left=of uppercircle] {$O_{p^2_1}$};
 	
 	\draw (uppercircle.east) -- (maintopic.west);
 	\draw (maintopic.east) -- (rightsquare.west);
 	\draw (lowercircle.east) -- (uppercircle.west);
 \end{tikzpicture}
  		\caption{$\mathcal{P}'(C_n)$} \label{fig:M1}
  	\end{center}
  	
  \end{figure} 

We conclude from the figure that to disconnect $\mathcal{P}'(C_n)$, we must delete at least $O_{p_1} $ or $ O_{p_1 \cdot p_2} $. However, $\mathcal{P}'(C_n) - O_2$ is disconnected with two component induced by $O_{p^2_1}$ and $ O_{p_1 \cdot p_2} \cup O_{p_2}$. Whereas, $\mathcal{P}'(C_n) - O_{p_1 \cdot p_2}$ is disconnected with two component induced by $ O_{p_1} \cup O_{p^2_1}$ and $ O_{p_2}$. 

If $p_2  = 3$, then $p_1  = 2$. So, $|O_{p^2_1}| =  |O_{p_2}|  =2$. Hence $\mathcal{P}(C_n)$ has no cyclic cutset. Whereas, if $p_2 \geq 5$, then $ O_{p_1} \cup O_{p^2_1}$ and $ O_{p_2}$ are cliques of sizes $p^2_1 -1 \geq 3$ and $p_2 -1 > 3$, respectively. Hence  $ O_{p_1 \cdot p_2} $  is a cyclic cutset of $\mathcal{P}'(C_n)$.

  \medskip
 
 \noindent
 {\bf Case  4.} $ \alpha_1 \geq 3,$ $\alpha_2 =1$. Then for $S = O_{p_1^{\alpha_1}} \cup O_{p_1p_2} \cup O_{p_2}$,  $\overline{S}$  is a  cutset of $\mathcal{P}'(C_n)$. Further, $O_{p_1^{\alpha_1}} $ and $ O_{p_1p_2} \cup O_{p_2}$ are cliques such that $|O_{p_1^{\alpha_1}} | \geq \phi(p_1^3) \geq \phi(2^3) > 3 $ and $|O_{p_1p_2} \cup O_{p_2}| = \phi(p_1p_2) + \phi(p_2) \geq \phi(2 \cdot 3) + \phi(3)  > 3$. Hence $\overline{S}$  is a cyclic cutset of $\mathcal{P}'(C_n)$.
 
   \medskip
 
 \noindent
 {\bf Case  5.} $ \alpha_2 \geq 2$. Then for $S = O_{p_1} \cup O_{p_1p_2} \cup O_{p_2^{\alpha_2}}$,  $\overline{S}$  is a cutset of $\mathcal{P}'(C_n)$. Moreover, $O_{p_1} \cup O_{p_1p_2}$ and $O_{p_2^{\alpha_2}}$ cliques such that $|O_{p_1} \cup O_{p_1p_2}| = \phi(p_1) + \phi(p_1p_2) \geq 1+2  = 3$ and $|O_{p_2^{\alpha_2}}| \geq \phi(p_2^2) \geq \phi(3^2) > 3$. Hence $\overline{S}$  is a cyclic cutset of $\mathcal{P}'(C_n)$.
\end{proof}

We now study neighbourhoods of $[x]$ for any $x \in C_n$ in $\mathcal{P}(C_n)$.

\begin{lemma}\label{lem0}
	For any positive integer $n$ and $x \in C_n$, we have
	\begin{equation*}\label{eqn-2}
		|N([x])| = \text{o}(x) - 2 \cdot \phi \left(\text{o}(x) \right) + \underset{d\, | \, \frac{n}{\text{o}(x)}} \sum \phi \left( \frac{n}{d} \right).
	\end{equation*}
\end{lemma}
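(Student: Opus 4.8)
The plan is to reduce the computation of $|N([x])|$ to the degree formula \eqref{eqn-1}. Set $d = \text{o}(x)$, so that $\langle x\rangle = H_d$ and $[x] = O_d$. The key preliminary observation is that for any vertex $y \notin [x]$, adjacency to one element of $[x]$ forces adjacency to every element of $[x]$. Indeed, if $a \in [x]$ then $\langle a\rangle = H_d$, so $y \sim a$ exactly when $y \in H_d$ or $a \in \langle y\rangle$; since $a$ generates $H_d$, the condition $a \in \langle y\rangle$ is equivalent to $H_d \subseteq \langle y\rangle$, and neither alternative depends on which generator $a$ we picked. Hence $N([x])$, which consists of the vertices outside $[x]$ adjacent to (some, equivalently every) vertex of $[x]$, is precisely $N(x) \setminus [x]$.

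Next I would count $N(x) \setminus [x]$. Since $x \notin N(x)$ and, because $[x]$ induces a clique, all of the remaining $\phi(d) - 1$ elements of $[x]$ lie in $N(x)$, we get $|N([x])| = |N(x)| - (\phi(d) - 1) = \deg(x) - \phi(d) + 1$. Substituting $\deg(x) = d + \sum_{d' \mid n/d}\phi(n/d') - \phi(d) - 1$ from \eqref{eqn-1} and simplifying gives $|N([x])| = d - 2\phi(d) + \sum_{d' \mid n/d}\phi(n/d')$, which is the asserted identity with $d = \text{o}(x)$.

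If one prefers to avoid quoting \eqref{eqn-1}, the same count can be done by hand: split $N([x])$ according to the order of the neighbour. The neighbours of $[x]$ that lie in $H_d$ are exactly the elements of $H_d \setminus O_d$, of which there are $d - \phi(d)$; the neighbours outside $H_d$ are exactly the $y$ with $d \mid \text{o}(y)$ and $\text{o}(y) \ne d$, of which there are $\sum_{d \mid e \mid n}\phi(e) - \phi(d)$. The substitution $e = n/d'$ shows $\sum_{d \mid e \mid n}\phi(e) = \sum_{d' \mid n/d}\phi(n/d')$, and summing the two contributions yields the formula once more. In either route the only point requiring care is the ``some versus every'' equivalence of the first paragraph, which is what makes $N([x])$ coincide with a single ordinary neighbourhood; the rest is routine manipulation of divisor sums and $\phi$. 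I would present the degree-formula derivation as the main proof, since it is the shortest.
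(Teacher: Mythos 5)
Your main argument is correct and is essentially the paper's proof: both decompose $N(x)$ as the disjoint union $N([x]) \cup ([x]\setminus\{x\})$ (your ``some versus every'' observation just makes explicit why this holds), deduce $|N([x])| = |N(x)| - \phi(\text{o}(x)) + 1$, and substitute the degree formula (\ref{eqn-1}). The alternative divisor-sum count you sketch is a valid self-contained variant, but the route you designate as the main proof coincides with the paper's.
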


\begin{proof}
	For any $x \in C_n$, we have $N(x)  = N([x]) \cup ([x]\setminus\{x\}) $. As $N([x]) \cap ([x]\setminus\{x\}) = \emptyset$, we get 
	\begin{align*}
		|N(x)| & = |N([x])| + |[x]\setminus\{x\}|\\
		& = |N([x])| + \phi(\text{o}(x))-1.
	\end{align*}
	Hence, 
	\begin{align*}
		|N([x])| & =  |N(x)| - \phi(\text{o}(x)) + 1\\
		& = \text{o}(x) - 2 \cdot \phi \left(\text{o}(x) \right) + \underset{d\, | \, \frac{n}{\text{o}(x)}} \sum \phi \left(\frac{n}{d}\right), \, \text{ by (\ref{eqn-1}).}
	\end{align*}
\end{proof}

	\begin{lemma}\label{lem_deg_prime_power}
		For any positive integer $n$ and $x \in C_n$ of  order $p_k^{\beta}$ for some positive integer $\beta$ and $1 \leq k \leq r$, we have
		\begin{equation*}
			\displaystyle |N([x])| = p_k^{\beta} - 2 \cdot \phi \left(p_k^{\beta} \right) + (p_k^{\alpha_k} - p_k^{\beta-1}) \prod_{j=1, j \neq k}^{r} p_j^{\alpha_j}.
		\end{equation*}
	\end{lemma}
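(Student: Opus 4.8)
The plan is to obtain this as a direct specialization of Lemma~\ref{lem0}. Putting $\text{o}(x) = p_k^{\beta}$ (where necessarily $1 \le \beta \le \alpha_k$, so that an element of this order exists) into Lemma~\ref{lem0}, the first two terms $p_k^{\beta} - 2\phi(p_k^{\beta})$ already match the claimed expression, so the entire task reduces to evaluating the divisor sum
\[
\Sigma := \sum_{d \,\mid\, n/p_k^{\beta}} \phi\!\left(\frac{n}{d}\right)
\]
and showing it equals $(p_k^{\alpha_k} - p_k^{\beta-1}) \prod_{j=1,\, j\neq k}^{r} p_j^{\alpha_j}$.

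To compute $\Sigma$, set $m = n/p_k^{\beta} = p_k^{\alpha_k-\beta}\prod_{j\neq k}p_j^{\alpha_j}$. As $d$ runs over the divisors of $m$, so does $e := m/d$, and $n/d = (n/m)\,e = p_k^{\beta}e$; hence $\Sigma = \sum_{e \mid m} \phi(p_k^{\beta}e)$. Now factor each such $e$ uniquely as $e = p_k^{\gamma} e'$ with $0 \le \gamma \le \alpha_k - \beta$ and $e'$ a divisor of $M := \prod_{j\neq k} p_j^{\alpha_j}$ coprime to $p_k$. Since $\phi$ is multiplicative and $\gcd(p_k^{\beta+\gamma}, e') = 1$, we get $\phi(p_k^{\beta}e) = \phi(p_k^{\beta+\gamma})\,\phi(e')$, and therefore
\[
\Sigma = \left(\sum_{\gamma=0}^{\alpha_k-\beta} \phi(p_k^{\beta+\gamma})\right)\left(\sum_{e' \mid M} \phi(e')\right).
\]
The second factor equals $M$ by the classical identity $\sum_{d\mid m}\phi(d)=m$. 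For the first factor, the same identity (applied to $p_k^{\alpha_k}$ and to $p_k^{\beta-1}$) gives $\sum_{i=\beta}^{\alpha_k}\phi(p_k^{i}) = p_k^{\alpha_k} - \sum_{i=0}^{\beta-1}\phi(p_k^{i}) = p_k^{\alpha_k} - p_k^{\beta-1}$. Substituting both back yields $\Sigma = (p_k^{\alpha_k} - p_k^{\beta-1})\prod_{j\neq k} p_j^{\alpha_j}$, and combining with Lemma~\ref{lem0} finishes the proof.

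There is no real obstacle here: the argument is a routine reindexing followed by two applications of the totient divisor-sum identity. The only points requiring a little care are the bookkeeping in the substitution $e = m/d$ (so that $n/d$ becomes $p_k^{\beta}e$) and checking that the boundary cases behave correctly --- in particular $\beta = \alpha_k$, where $m$ is coprime to $p_k$, the single term $\gamma = 0$ remains, and the formula still reads $\phi(p_k^{\alpha_k}) \cdot M = (p_k^{\alpha_k}-p_k^{\alpha_k-1})M$, consistent with the stated expression.
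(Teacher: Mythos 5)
Your proposal is correct and follows essentially the same route as the paper: both specialize Lemma~\ref{lem0} and then evaluate the divisor sum by separating the $p_k$-part from the part coprime to $p_k$, using multiplicativity of $\phi$ together with the identity $\sum_{d \mid m}\phi(d)=m$ (the paper writes this directly as a product of per-prime sums, your reindexing $e=m/d$ is only a cosmetic difference). No gaps.
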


	\begin{proof}
In view of Lemma \ref{lem0}, it is enough to prove that 		\begin{align*}	
	\underset{d\, | \, { \frac{n}{p_k^{\beta}}}} \sum \phi \left( \frac{n}{d} \right) = (p_k^{\alpha_k} - p_k^{\beta-1}) \prod_{j=1, j \neq k}^{r} p_j^{\alpha_j}.
\end{align*}	
%
		We have	
		\begin{align*}	
			\underset{d\, | \, { \frac{n}{p_k^{\beta}}}} \sum \phi \left( \frac{n}{d} \right) &  =  \underset{d\, | \, {p_k^{\alpha_k - \beta} \prod_{j=1, j \neq k}^{r} p_j^{\alpha_j} }} \sum \phi \left( \frac{p_1^{\alpha_1}p_2^{\alpha_2} \cdots p_r^{\alpha_r}}{d} \right)\\
			& = \left\{ \sum_{i=0}^{\alpha_k - \beta} \phi \left(p_k^{\alpha_k-i}\right) \right\} \prod_{j=1, j \neq k}^{r} \left\{ \sum_{i=0}^{\alpha_j} \phi \left({p_j^{\alpha_j-i}}\right) \right\} \\
			& = (p_k^{\alpha_k} - p_k^{\beta-1}) \prod_{j=1, j \neq k}^{r} p_j^{\alpha_j}.
		\end{align*}
		
	\end{proof}

\begin{lemma}\label{compare_nbd}
	For positive integers $1 \leq k \leq r$, $\beta < \gamma$, if $x,y$ are elements of orders $p_k^{\beta}$ and $p_k^{\gamma}$ in $C_n$, respectively, then $|N([x])| > |N([y])|$.
	\begin{align*}
	 &|N([x])| - |N([y])|\\
	 & = p_k^{\beta} - 2 \cdot \phi \left(p_k^{\beta} \right) + (p_k^{\alpha_k} - p_k^{\beta-1}) \prod_{j=1, j \neq k}^{r} p_j^{\alpha_j} - \left[ p_k^{\gamma} - 2 \cdot \phi \left(p_k^{\gamma} \right) + (p_k^{\alpha_k} - p_k^{\gamma-1}) \prod_{j=1, j \neq k}^{r} p_j^{\alpha_j} \right]\\
	 & = - p_k^{\beta} + 2 p_k^{\beta-1} - (- p_k^{\gamma} + 2 p_k^{\gamma-1}) + (p_k^{\gamma-1} - p_k^{\beta-1}) \prod_{j=1, j \neq k}^{r} p_j^{\alpha_j}  \\
	& = p_k^{\gamma} - p_k^{\beta} - 2( p_k^{\gamma-1} - p_k^{\beta-1}) + (p_k^{\gamma-1} - p_k^{\beta-1}) \prod_{j=1, j \neq k}^{r} p_j^{\alpha_j}  \\
   & = (p_k^{\gamma-1} - p_k^{\beta-1}) \left( p_k - 2 +  \prod_{j=1, j \neq k}^{r} p_j^{\alpha_j} \right) > 0.
	\end{align*}
\end{lemma}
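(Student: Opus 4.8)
The plan is to reduce everything to the closed-form expression of Lemma~\ref{lem_deg_prime_power}. First I would apply that lemma to $x$ (with exponent $\beta$) and to $y$ (with exponent $\gamma$); since $x$ and $y$ lie in the same group $C_n$ and have orders that are powers of the same prime $p_k$, the data $\alpha_k$ and $\prod_{j\neq k}p_j^{\alpha_j}$ appearing in the formula are identical for both, so subtracting the two expressions causes the term $p_k^{\alpha_k}\prod_{j\neq k}p_j^{\alpha_j}$ to cancel outright.

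Next I would simplify the remaining difference. Using $\phi(p_k^{m})=p_k^{m}-p_k^{m-1}$, the ``order minus twice the totient'' parts become $p_k^{m}-2\phi(p_k^{m})=-p_k^{m}+2p_k^{m-1}$, and the leftover multiples of the product combine into $(p_k^{\gamma-1}-p_k^{\beta-1})\prod_{j\neq k}p_j^{\alpha_j}$. Collecting terms and using the identity $p_k^{\gamma}-p_k^{\beta}=p_k\bigl(p_k^{\gamma-1}-p_k^{\beta-1}\bigr)$, every summand acquires the common factor $p_k^{\gamma-1}-p_k^{\beta-1}$, so the difference factors as $(p_k^{\gamma-1}-p_k^{\beta-1})\bigl(p_k-2+\prod_{j\neq k}p_j^{\alpha_j}\bigr)$ --- exactly the chain of equalities displayed in the statement.

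Finally I would verify positivity of each factor. Since $\beta<\gamma$ we have $p_k^{\gamma-1}>p_k^{\beta-1}$, so the first factor is strictly positive. For the second factor I would invoke the standing assumption $r\geq 2$: there is at least one index $j\neq k$, and each $p_j^{\alpha_j}\geq 2$, so $\prod_{j\neq k}p_j^{\alpha_j}\geq 2$; combined with $p_k\geq 2$ this gives $p_k-2+\prod_{j\neq k}p_j^{\alpha_j}\geq 2>0$. Hence $|N([x])|-|N([y])|>0$.

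There is no serious obstacle here; the argument is a direct computation, and the displayed equalities in the statement already record it. The only point that needs a moment's care is the positivity of $p_k-2+\prod_{j\neq k}p_j^{\alpha_j}$ in the worst case $p_k=2$, where the term $p_k-2$ vanishes and one genuinely needs a second prime factor of $n$ to be present --- this is precisely where the hypothesis $r\geq 2$ is used.
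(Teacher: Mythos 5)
Your proposal is correct and follows essentially the same route as the paper: apply Lemma \ref{lem_deg_prime_power} to both $x$ and $y$, subtract, and factor the difference as $(p_k^{\gamma-1}-p_k^{\beta-1})\bigl(p_k-2+\prod_{j\neq k}p_j^{\alpha_j}\bigr)$, which is exactly the displayed chain of equalities. Your explicit remark that positivity in the case $p_k=2$ relies on the standing assumption $r\geq 2$ is a helpful clarification the paper leaves implicit.
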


\begin{proposition}
For any $x \in C_n$ of order $p_k^{\beta}$, $1 \leq k \leq r$, $N([x])$ is a minimum cutset of $\mathcal{P}(C_n)$ if and only if $\beta=\alpha_k$, and $\alpha_k = 2$ or $n$ is a product of two distinct primes.
\end{proposition}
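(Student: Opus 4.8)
The plan is to identify $N([x])$ explicitly as a known cutset and then compare cardinalities. First I would observe that for $x$ of order $p_k^\beta$, the set $N([x])$ consists exactly of the elements of $C_n$ that are adjacent to every generator of $\langle x\rangle$; by the adjacency description given in the introduction (elements of $O_c$ are joined to all of $O_d$ iff $c\mid d$ or $d\mid c$), this is the union over all divisors $d$ of $n$ with $p_k^\beta \mid d$ of $O_d$, together with the identity and the elements whose order divides $p_k^\beta$ but which lie in $\langle x\rangle$ — but since we have removed $[x]$ itself, $N([x]) = \bigl(H_{p_k^\beta}\setminus[x]\bigr) \cup \bigcup_{p_k^\beta \mid d,\ d\mid n} O_d$. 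The point is that $\mathcal{P}(C_n) - N([x])$ disconnects $[x]$ (which, if $\phi(p_k^\beta)\geq 1$, is nonempty) from, say, any element of order $p_j$ for $j\neq k$; so $N([x])$ is always a cutset, and $|N([x])|$ is given by Lemma \ref{lem_deg_prime_power}. Thus the proposition reduces to: for which $k,\beta$ does $|N([x])|$ equal $\kappa(\mathcal{P}(C_n))$?

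Next I would handle the two directions separately. For the "if" direction, suppose first that $n=p_1p_2$ is a product of two distinct primes. Then $\alpha_k=1$ for both $k$, so $\beta=\alpha_k=1$ is forced, and I would compute $|N([x])|$ from Lemma \ref{lem_deg_prime_power} and check it equals $\phi(n)+p_1^{\alpha_1-1}p_2^{\alpha_2-1}=\phi(n)+1$, which by Lemma \ref{lem2} is exactly $\kappa(\mathcal{P}(C_n))$. For the case $\alpha_k=2$: here $\beta=\alpha_k=2$, and I would show $|N([x])| = |Y_k|$ where $Y_k$ is the minimal cutset of Lemma \ref{lem_minimal_cutset}; indeed when $\alpha_k=2$, $H_{n/(p_ip_k)}$ ranges over subgroups containing exactly the elements whose order is coprime-to-$p_k$ part times $p_k$, and a direct divisor-sum computation should match $|N([x])| = p_k^2 - 2\phi(p_k^2) + (p_k^{2}-p_k)\prod_{j\neq k}p_j^{\alpha_j}$ with the formula $|Y_k| = \phi(n) + \frac{n}{p_k} - p_k^{\alpha_k-1}\phi(n/p_k^{\alpha_k})$ from Lemma \ref{lem_minimal_cutset}. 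Combined with Lemma \ref{lem_alpha} (which gives $\kappa \leq \alpha(n) = |Y_r|$) and a check that $|Y_k|$ is actually minimum in these cases — the $r=2$ subcase is Lemma \ref{lem2}, and for $\alpha_k=2$ one would need the lower bound $\kappa \geq |N([x])|$, which follows because $N([x])$ being a neighbourhood means any cutset separating $[x]$ from the rest must contain $N([x])$, while any cutset not separating $[x]$ from everything... — here I need to be a little careful, see below.

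For the "only if" direction I would prove the contrapositive: if $\beta<\alpha_k$, then by Lemma \ref{compare_nbd} there is an element $y$ of order $p_k^{\alpha_k}$ (or just order $p_k^{\beta+1}$) with $|N([y])| < |N([x])|$, and $N([y])$ is also a cutset, so $N([x])$ cannot be minimum. And if $\beta=\alpha_k$ but $n$ has at least three prime factors and $\alpha_k\neq 2$ (so $\alpha_k\geq 3$, or $\alpha_k=1$ with $r\geq 3$), I would exhibit a strictly smaller cutset: the natural candidate is the cutset $\overline{S}$ with $S = H_{p_ip_j}^* \cup O_{p_\ell}$-type sets used in the proof of Theorem \ref{thm1}, or more cleanly the cutset $N([z])$ for $z$ of order $p_\ell^{\alpha_\ell}$ where $p_\ell$ is chosen to minimize $|N([z])|$; comparing $|N([x])|$ for the various prime-power orders via the closed formula in Lemma \ref{lem_deg_prime_power} should show $p_\ell = p_r$ (the largest prime) with $\alpha_\ell$ its full exponent gives the smallest value, and when $r\geq 3$ this is strictly smaller than $|N([x])|$ for $x$ of order $p_k^{\alpha_k}$ with $k<r$ or with $\alpha_k\geq 3$ — the key inequality being the product term $(p_k^{\alpha_k}-p_k^{\alpha_k-1})\prod_{j\neq k}p_j^{\alpha_j}$ dominating, which shrinks as we move $k$ toward $r$.

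The main obstacle I anticipate is the lower-bound half of the "if" direction when $\alpha_k = 2$ and $r \geq 3$: showing that $|N([x])| = |Y_k|$ is genuinely the \emph{minimum} cutset size and not just some cutset, since in general $\kappa(\mathcal{P}(C_n))$ for three or more primes is not pinned down by the lemmas quoted (only the upper bound $\alpha(n)$ and the $r=2$ case are). I would need to argue that every cutset of $\mathcal{P}'(C_n)$ has size at least $|N([x])|$ when $\alpha_k=2$; the cleanest route is probably to show any minimal cutset of $\mathcal{P}'(C_n)$ is the neighbourhood of some $[z]$ (a structural fact about these power graphs — minimal separators "surround" a generating set), reduce to comparing $|N([z])|$ over all $z$ via Lemma \ref{compare_nbd} and the prime-power formula, and verify that when some $\alpha_k=2$ the minimum over all such neighbourhoods is attained at an element of order $p_k^2$. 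This structural reduction is the delicate point; everything else is the divisor-sum bookkeeping already modelled in Lemmas \ref{lem_deg_prime_power} and \ref{compare_nbd}.
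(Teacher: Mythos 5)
Your overall reduction (a cutset $N([x])$ is minimum iff $|N([x])|=\kappa(\mathcal{P}(C_n))$) and your use of Lemma \ref{compare_nbd} to eliminate $\beta<\alpha_k$ are sound, and on the necessity side your plan runs parallel to the paper's proof, which works by comparing $|N([x])|$ with the minimal cutset $Y_k$ of Lemma \ref{lem_minimal_cutset}. The genuine gap is exactly where you flag it: the sufficiency half beyond $r=2$, and the route you sketch for it does not work. First, the identity you want, $|N([x])|=|Y_k|$ when $\beta=\alpha_k=2$, is not an identity: carrying out the subtraction (this is precisely the computation the paper performs) gives, for $\beta=\alpha_k$, $|N([x])|-|Y_k|=p_k^{\alpha_k-1}(p_k-2)\bigl(M-\phi(M)-1\bigr)$ with $M=\prod_{j\neq k}p_j^{\alpha_j}$, which vanishes only when $p_k=2$ or $M$ is prime. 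For instance, for $n=36$ and $x$ of order $9$ one gets $|N([x])|=21$ while $|Y_k|=18=\kappa(\mathcal{P}(C_{36}))$ by Lemma \ref{lem2}, so the ``$\alpha_k=2$'' case cannot be closed by matching $|N([x])|$ with $|Y_k|$, and your divisor-sum bookkeeping there would not come out as you expect. Second, the structural claim you propose to obtain the missing lower bound $\kappa(\mathcal{P}(C_n))\geq|N([x])|$ --- that every minimal cutset of $\mathcal{P}'(C_n)$ is the neighbourhood of some $[z]$ --- is false: again for $n=36$, the set $Y=O_{36}\cup H_{6}$ of Lemma \ref{lem_minimal_cutset} is a minimal cutset of size $18$, but running through the possible orders $1,2,3,4,6,9,12,18,36$ shows it equals no $N([z])$. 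So the ``delicate point'' is not merely delicate; the proposed reduction fails.

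There is also a smaller incompleteness on the necessity side: after $\beta=\alpha_k$, your case split (``$\alpha_k\geq 3$, or $\alpha_k=1$ with $r\geq 3$'') does not exhaust the negation of the stated condition; it omits the $r=2$, non-squarefree cases with $\alpha_k\neq 2$, e.g.\ $n=12$ and $x$ of order $3$, where $|N([x])|=7>6=\kappa(\mathcal{P}(C_{12}))$. By the difference formula above, what actually governs equality with $|Y_k|$ is whether $p_k=2$ or the complementary part $M$ is prime, not whether $\alpha_k=2$, so the case analysis has to be organized around that. For comparison, the paper's own argument treats only the direction in which $N([x])$ is assumed minimum: it deduces $\beta=\alpha_k$ from $|N([x])|\geq|Y_k|$ and then reads the equality condition off the displayed difference, never invoking any classification of minimal cutsets; it does not supply the sufficiency argument you are trying to build either. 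In short, your necessity outline is repairable and close in spirit to the paper, but the sufficiency step you identify as the crux is missing, and both of the specific devices you propose for it (the identity $|N([x])|=|Y_k|$ for $\alpha_k=2$, and ``minimal cutsets are neighbourhoods'') break down on concrete examples.
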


\begin{proof}
Suppose that $N([x])$ is a minimum cutset of $\mathcal{P}(C_n)$. From Lemma \ref{lem_deg_prime_power}, 
\begin{align*}
 |N([x])| & = p_k^{\beta} - 2 \, \phi \left(p_k^{\beta} \right) + (p_k^{\alpha_k} - p_k^{\beta-1}) \prod_{i=1, i \neq k}^{r} p_i^{\alpha_i}\\
 & = (p_k^{\alpha_k} - p_k^{\beta-1}) \left[\prod_{i=1, i \neq k}^{r} p_i^{\alpha_i} \right]  -  \left(p_k^{\beta} - 2 p_k^{\beta-1} \right).
\end{align*}

From Lemma \ref{lem_minimal_cutset}, $Y$ is a cutset $\mathcal{P}(C_n)$ and that
\begin{align*}
	|Y| & = \phi(n) + \frac{n}{p_k} - p_k^{\alpha_k-1} \phi \left(\dfrac{n}{p_k^{\alpha_k}} \right)\\
	& = \phi \left( \prod_{i=1}^{r} p_i^{\alpha_i} \right) + p_k^{\alpha_k-1} \left( \prod_{i=1, i \neq k}^{r} p_i^{\alpha_i} \right) - p_k^{\alpha_k-1} \phi\left( \prod_{i=1, i \neq k}^{r} p_i^{\alpha_i} \right)\\
	& = \left( \prod_{i=1}^{r} p_i^{\alpha_i-1} \right) \left[ \phi\left( \prod_{i=1}^{r} p_i \right) +  \prod_{i=1, i \neq k}^{r} p_i  -  \phi\left( \prod_{i=1, i \neq k}^{r} p_i \right) \right]\\
		& = \left( \prod_{i=1}^{r} p_i^{\alpha_i-1} \right) \left[   \prod_{i=1, i \neq k}^{r} p_i  +  (p_k-2) \phi\left( \prod_{i=1, i \neq k}^{r} p_i \right) \right].
\end{align*}
Hence
\begin{align}\label{eq1.1}
	& \displaystyle |N([x])| - |Y| \nonumber\\ 
	& = \left( \prod_{i=1, i \neq k}^{r} p_i^{\alpha_i-1} \right) \left[ (p_k^{\alpha_k} - p_k^{\beta-1}) \prod_{i=1, i \neq k}^{r} p_i  - p_k^{\alpha_k-1} \prod_{i=1, i \neq k}^{r} p_i  - (p_k^{\alpha_k} - 2p_k^{\alpha_k-1}) \phi\left( \prod_{i=1, i \neq k}^{r} p_i \right) \right] \nonumber \\
	& \qquad \qquad \qquad \qquad \qquad \qquad \qquad \qquad \qquad \qquad \qquad \qquad \qquad \qquad \qquad -  \left(p_k^{\beta} - 2 p_k^{\beta-1} \right) \nonumber\\
  	& = \left( \prod_{i=1, i \neq k}^{r} p_i^{\alpha_i-1} \right) \left[ (p_k^{\alpha_k} - p_k^{\alpha_k-1} - p_k^{\beta-1}) \prod_{i=1, i \neq k}^{r} p_i - (p_k^{\alpha_k} - 2p_k^{\alpha_k-1}) \phi\left( \prod_{i=1, i \neq k}^{r} p_i \right) \right] \nonumber\\
  & \qquad \qquad \qquad \qquad \qquad \qquad \qquad \qquad \qquad \qquad \qquad \qquad \qquad \quad \quad -  \left(p_k^{\beta} - 2 p_k^{\beta-1} \right).
\end{align}

From (\ref{eq1.1}), we get
\begin{align}\label{eq1.2}
	& \displaystyle |N([x])| - |Y| \nonumber\\ 
	& \geq \left( \prod_{i=1, i \neq k}^{r} p_i^{\alpha_i-1} \right) \left[ (p_k^{\alpha_k} - 2p_k^{\alpha_k-1}) \prod_{i=1, i \neq k}^{r} p_i - (p_k^{\alpha_k} - 2p_k^{\alpha_k-1}) \phi\left( \prod_{i=1, i \neq k}^{r} p_i \right) \right] \nonumber\\
	& \qquad \qquad \qquad \qquad \qquad \qquad \qquad \qquad \qquad \qquad \qquad \qquad \qquad \qquad -  \left(p_k^{\beta} - 2 p_k^{\beta-1} \right)\\
  	& = \left( \prod_{i=1, i \neq k}^{r} p_i^{\alpha_i-1} \right) p_k^{\alpha_k-1} (p_k - 2) \left[  \prod_{i=1, i \neq k}^{r} p_i -  \phi\left( \prod_{i=1, i \neq k}^{r} p_i \right) \right] -  p_k^{\beta-1} (p_k - 2)\\
    & = \left( \prod_{i=1, i \neq k}^{r} p_i^{\alpha_i-1} \right) p_k^{\beta-1} (p_k - 2) \left[ p_k^{\alpha_k - \beta} \left\{  \prod_{i=1, i \neq k}^{r} p_i -  \phi\left( \prod_{i=1, i \neq k}^{r} p_i \right) \right\} -  1 \right].
\end{align}

We observe that $\prod_{i=1, i \neq k}^{r} p_i -  \phi\left( \prod_{i=1, i \neq k}^{r} p_i \right) \geq 1$ and equality holds if and only if $r=2$. From above, we conclude that $|N([x])| \geq |Y|$.

From (2), if $\beta < \alpha_k$, then $|N([x])| > |Y|$. Since this is a contradiction, $\beta = \alpha_k$. As a result, we get 
\begin{align*}
	 \displaystyle |N([x])| - |Y| \nonumber  & = \left( \prod_{i=1, i \neq k}^{r} p_i^{\alpha_i-1} \right) p_k^{\alpha_k-1} (p_k - 2) \left[ \prod_{i=1, i \neq k}^{r} p_i -  \phi\left( \prod_{i=1, i \neq k}^{r} p_i \right)  -  1 \right].
\end{align*}
 
 If $p_k = p_1 = 2$, then $|N([x])| = |Y|$. Now, let $p_k > 2$. Then $|N([x])| = |Y|$ if and only if $r=2$.
\end{proof}

\begin{lemma}\label{lem3}
	For any positive integer $n$, the cyclic vertex connectivity and vertex connectivity of $\mathcal{P}(C_n)$ are not equal if one the following holds:
	\begin{enumerate}[\rm(i)]
		\item $n$ is a prime factor,
		\item $n = p_1p_2$ for some primes $p_1 < p_2$ such that $p_1 \leq 3$,
		\item $n = 4p$ for some odd prime $p$.
	\end{enumerate}
\end{lemma}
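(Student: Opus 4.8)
The plan is to dispose of the three cases separately, showing in each that $\kappa(\mathcal{P}(C_n))$ is a finite positive integer while $c\kappa(\mathcal{P}(C_n))$ is either infinite or strictly larger than $\kappa(\mathcal{P}(C_n))$. Cases (i) and (ii) will follow essentially for free from the cyclic-separability results already proved, and the real work is confined to one subcase of (iii).

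For (i), write $n=p^k$. The subgroups of $C_{p^k}$ form a chain, so any two elements are comparable under taking powers and $\mathcal{P}(C_{p^k})$ is the complete graph $K_{p^k}$; a complete graph has no vertex cutset, hence no cyclic cutset, so $c\kappa(\mathcal{P}(C_n))=\infty$ while $\kappa(\mathcal{P}(C_n))=p^k-1$. (Equivalently, $C_{p^k}$ has a unique maximal cyclic subgroup, so $\mathcal{P}(C_{p^k})$ fails to be cyclically separable by Theorem~\ref{thm_p_group}.) For (ii), an $n$ of the form $p_1p_2$ with $p_1\in\{2,3\}$ violates condition~(ii) of Theorem~\ref{thm1}, so $\mathcal{P}(C_n)$ is not cyclically separable and $c\kappa(\mathcal{P}(C_n))=\infty$, whereas Lemma~\ref{lem2} gives the finite value $\kappa(\mathcal{P}(C_n))=\phi(n)+1$. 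In both cases the two parameters are unequal.

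For (iii), let $n=4p$ with $p$ an odd prime. If $p=3$ then $n=12$, which violates condition~(iii) of Theorem~\ref{thm1}, so again $c\kappa=\infty>\kappa$. So assume $p\ge 5$; now $\mathcal{P}(C_n)$ is cyclically separable by Theorem~\ref{thm1}, and a genuine comparison of the two finite parameters is needed. Using Lemma~\ref{lem2} with the prime factors $2$ (exponent $2$) and $p$ (exponent $1$) gives $\kappa(\mathcal{P}(C_{4p}))=\phi(4p)+2^{1}p^{0}=2(p-1)+2=2p$. On the other hand, every cutset of $\mathcal{P}(C_n)$ contains $O_n\cup O_1$, and $X$ is a cyclic cutset of $\mathcal{P}(C_n)$ exactly when $X\setminus(O_n\cup O_1)$ is a cyclic cutset of $\mathcal{P}'(C_n)$ (both facts established in the Introduction); since $|O_n\cup O_1|=\phi(4p)+1=2p-1$, this yields $c\kappa(\mathcal{P}(C_{4p}))=(2p-1)+m$, where $m$ is the least cardinality of a cyclic cutset of $\mathcal{P}'(C_{4p})$. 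It therefore suffices to prove $m\ge 2$. In $\mathcal{P}'(C_{4p})$ the vertex set splits into the cliques $O_4,O_2,O_{2p},O_p$ of sizes $2,1,p-1,p-1$, with $O_d$ completely joined to $O_e$ precisely when $d\mid e$ or $e\mid d$, so the block structure is the path $O_4-O_2-O_{2p}-O_p$. A short case analysis of single-vertex deletions shows that the only one disconnecting $\mathcal{P}'(C_{4p})$ removes the unique element $u$ of $O_2$, and $\mathcal{P}'(C_{4p})-u$ then has exactly two components, one induced by $O_4$ and one by $O_{2p}\cup O_p$; as $|O_4|=2$, the former contains no cycle, so $\{u\}$ is not a cyclic cutset. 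Hence $m\ge 2$, and $c\kappa(\mathcal{P}(C_{4p}))\ge 2p+1>2p=\kappa(\mathcal{P}(C_{4p}))$.

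The invocations of Theorems~\ref{thm1} and~\ref{thm_p_group}, Lemma~\ref{lem2}, and the elementary counting are routine. The only step calling for care is the last one in (iii): identifying the block structure of $\mathcal{P}'(C_{4p})$ and verifying that its single cut vertex cannot serve as a cyclic cutset because the small side $O_4$ is acyclic. I expect this to be the main obstacle, though it is brief once the structure is written down.
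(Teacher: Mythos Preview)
Your proof is correct and follows essentially the same route as the paper: cases (i), (ii), and $n=12$ are handled via Theorem~\ref{thm1} (non-separability gives $c\kappa=\infty$), and for $n=4p$ with $p\ge 5$ both you and the paper compute $\kappa=\phi(4p)+2=2p$ from Lemma~\ref{lem2} and then analyse the path-of-cliques structure $O_4-O_2-O_{2p}-O_p$ of $\mathcal{P}'(C_{4p})$. The only difference is in the finish: the paper identifies the minimal cutsets $O_2$ and $O_{2p}$ of $\mathcal{P}'(C_{4p})$, observes that only the latter yields a cyclic cutset, and computes $c\kappa=\phi(n)+p$ exactly, whereas you more economically note that the sole cut vertex lies in $O_2$ and its removal leaves the acyclic component $O_4$, giving the bound $c\kappa\ge 2p+1$, which already suffices.
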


\begin{proof}
By Theorem \ref{thm1}, if (i) or (ii) hold, or if $n = 12$, then $\mathcal{P}(C_n)$ is not cyclically separable. Then $c\kappa(\mathcal{P}(C_n)) = \infty$. As a result, $c\kappa(\mathcal{P}(C_n)) \neq \kappa(\mathcal{P}(C_n))$. Now let $n = 4p$ for some prime $p \geq 5$. Then by Lemma \ref{lem2}, $\kappa(\mathcal{P}(C_n)) = \phi(n) + 2$. Next we compute $c\kappa(\mathcal{P}(C_n))$. Note that  $C'_n$ is the union of $O_{4}$, $O_{2} $, $ O_{2p} $, and  $ O_{p}$, which are cliques of size $2$, $1$, $p-1$, and $p-1$, respectively, in $\mathcal{P}'(C_n)$. The adjacency relation between the elements of these four sets can be visualized in the following figure. We observe that $O_{2} $ and $ O_{2p} $ are the only minimal disconnecting sets of $\mathcal{P}'(C_n)$.
\begin{figure}[h]
	
	\begin{center}
		\begin{tikzpicture}[
			roundnode/.style={circle, draw=black},
			squarednode/.style={rectangle, draw=red!60, fill=red!5, very thick, minimum size=5mm},
			]
			\node[roundnode]      (maintopic)                              {$O_{2p}$};
			\node[roundnode]        (uppercircle)       [left=of maintopic] {$O_{2}$};
			\node[roundnode]      (rightsquare)       [right=of maintopic] {$O_{p}$};
			\node[roundnode]        (lowercircle)       [left=of uppercircle] {$O_{2^{2}}$};
			
			\draw (uppercircle.east) -- (maintopic.west);
			\draw (maintopic.east) -- (rightsquare.west);
			\draw (lowercircle.east) -- (uppercircle.west);
		\end{tikzpicture}
		\caption{$\mathcal{P}'(C_n)$} \label{fig:M2}
	\end{center}	
\end{figure}
We deduce from the figure that $O_1 \cup O_{n} \cup O_{2p}$ is the only cyclic cutset of $\mathcal{P}(C_n)$. Hence $c\kappa(\mathcal{P}(C_n)) = \phi(n) + 1 + \phi(2p)  = \phi(n) + 1 + p-1 = \phi(n) + p$. Hence, again $\kappa(\mathcal{P}(C_n)) \neq c\kappa(\mathcal{P}(C_n))$.
\end{proof}

The finally prove the following theorem.

\begin{theorem}
	For any positive integer $n$, the cyclic connectivity and vertex connectivity of $\mathcal{P}(C_n)$ are equal if and only if the following holds:
	\begin{enumerate}[\rm(i)]
		\item $n$ has at least two prime factors,
		\item $n \neq p_1p_2$ for any primes $p_1 < p_2$ such that $p_1 \leq 3$,
		\item $n \neq 4p$ for any odd prime $p$.
	\end{enumerate}
\end{theorem}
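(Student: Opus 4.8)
The plan is to prove both directions by carefully combining the cyclic separability characterization of Theorem~\ref{thm1} with the structural facts about cutsets of $\mathcal{P}(C_n)$ established above. For the forward direction (if $\kappa = c\kappa$, then (i)--(iii) hold), I would argue contrapositively: if one of (i)--(iii) fails, then either $n$ is a prime power, or $n = p_1 p_2$ with $p_1 \le 3$, or $n = 4p$ for an odd prime $p$ (noting that $12 = 4 \cdot 3$ is the case $4p$ with $p = 3$, which is already covered once we phrase (iii) as ``$n \neq 4p$ for any odd prime''). In each of these cases Lemma~\ref{lem3} gives $\kappa(\mathcal{P}(C_n)) \neq c\kappa(\mathcal{P}(C_n))$, so this direction is essentially immediate from Lemma~\ref{lem3}, modulo checking that the three exceptional families in the theorem match the three cases of Lemma~\ref{lem3}.

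The substance is in the converse: assuming $n$ satisfies (i)--(iii), show $\kappa(\mathcal{P}(C_n)) = c\kappa(\mathcal{P}(C_n))$. Since $\kappa \le c\kappa$ always holds (a cyclic cutset is a cutset), it suffices to exhibit a cyclic cutset of size $\kappa(\mathcal{P}(C_n))$. The natural candidate is a minimum cutset of the form $Y_k$ from Lemma~\ref{lem_minimal_cutset}, or $N([x])$ for a suitable $x$ of prime-power order, as analyzed in the Proposition. I would split into two regimes. \emph{First}, when $n$ has exactly two prime factors $p_1^{\alpha_1} p_2^{\alpha_2}$: by Lemma~\ref{lem2}, $Y_1 = Y_2$ is a minimum cutset; I would examine the components of $\mathcal{P}(C_n) - Y_2$ and show that, under the constraints (ii) and (iii) on $n$, each of the two sides contains a clique of size at least $4$ (hence a cycle). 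The components of $\mathcal{P}'(C_n) - (Y_2 \setminus (O_n \cup O_1))$ should be (roughly) the part built from powers of $p_1$ together with $O_{p_2}$-type orbits on one side and the remaining orbits on the other; a case analysis on whether $\alpha_1, \alpha_2$ exceed $1$ and on the size of $p_1$ pins down when both sides are large enough — this essentially recapitulates the case structure already worked out in the proof of Theorem~\ref{thm1}, where each surviving case was shown to actually produce a \emph{cyclic} cutset, and I would quote that computation. \emph{Second}, when $n$ has at least three prime factors: here $p_1 \ge 2$, $p_2 \ge 3$, $p_3 \ge 5$, so there is lots of room. I would take $x$ of order $p_r^{\alpha_r}$ and consider $N([x])$; by Lemma~\ref{lem_alpha} and the $r=2$ discussion, together with the Proposition, I need to know how $|N([x])|$ compares to $\kappa(\mathcal{P}(C_n))$. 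In fact the cleanest route is to observe that removing $O_n \cup O_1$ plus the neighborhood machinery isolates a clique on the elements of order $p_r$ (size $\phi(p_r) \ge 4$) on one side, while the other side still contains, say, a clique on $O_{p_1 p_2} \cup O_{p_1}$ (size $\ge 4$), exactly as in the first paragraph of the proof of Theorem~\ref{thm1}; so the minimum cutset separating $O_{p_r}$-type vertices from the rest is already cyclic, and one must check its size equals $\kappa$.

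The main obstacle I anticipate is the two-prime-factor regime: one must verify that the specific minimum cutset $Y_2$ (as opposed to some other minimum cutset) leaves two cycle-containing components, and simultaneously that no cyclic cutset can be smaller than $|Y_2| = \phi(n) + p_1^{\alpha_1 - 1} p_2^{\alpha_2 - 1}$. The lower bound $c\kappa \ge \kappa = |Y_2|$ is free; the real work is exhibiting a cyclic cutset of exactly this size and confirming the component sizes. The delicate subcases are those near the excluded boundary — e.g. $n = p_1^2 p_2$ with $p_1 = 2$ and $p_2 \ge 5$, or $n = p_1 p_2^{\alpha_2}$ with $\alpha_2 \ge 2$ and $p_1 \in \{2,3\}$ — where one side is a small clique and the argument of Theorem~\ref{thm1}'s Cases~2, 4, 5 must be invoked to confirm the surviving component still has a clique of size $> 3$. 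I would organize the converse by mirroring exactly those cases, in each one pointing to the cyclic cutset already constructed in the proof of Theorem~\ref{thm1} and then computing its cardinality and comparing with the formula in Lemma~\ref{lem2} (or with $|N([x])|$ via the Proposition) to confirm equality with $\kappa(\mathcal{P}(C_n))$.
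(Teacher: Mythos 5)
Your easy direction (via Lemma \ref{lem3}) matches the paper, and your $r=2$ plan is salvageable; the genuine gap is in the regime where $n$ has at least three prime factors. There you propose to exhibit a specific cyclic cutset — essentially $N([x])$ for $x$ of order $p_r^{\alpha_r}$, the separator isolating the $O_{p_r^{\alpha_r}}$-vertices — and then ``check its size equals $\kappa$.'' That check cannot be carried out with the available tools, and in fact it fails: for $r\geq 3$ the paper has no formula for $\kappa(\mathcal{P}(C_n))$ (Lemma \ref{lem2} covers only $r=2$, and Lemma \ref{lem_alpha} gives only an upper bound $\alpha(n)$), and the Proposition comparing $|N([x])|$ with $|Y|$ shows that for $\beta=\alpha_k$ the difference $|N([x])|-|Y_k|$ is strictly positive whenever $p_k>2$ and $r\geq 3$; so for $x$ of order $p_r^{\alpha_r}$ (with $p_r\geq 5$) one has $|N([x])|>|Y_r|\geq \kappa(\mathcal{P}(C_n))$, i.e.\ your candidate is provably \emph{not} a minimum cutset. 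Exhibiting it only reproves cyclic separability (Theorem \ref{thm1}), not $c\kappa=\kappa$. What is actually needed is to show that \emph{some} minimum cutset, whose structure is unknown for $r\geq 3$, leaves two cycle-containing components. The paper gets around this by arguing contrapositively: assume $\kappa\neq c\kappa$, take an arbitrary minimum cutset $X$, show that an acyclic component of $\mathcal{P}(C_n)-X$ must consist of a single class $[x]$ with $\text{o}(x)\in\{2,3,4,6\}$ (forcing $X=N([x])$ or $N(x)$), and then use the size comparisons with $N([y])$ (Lemma \ref{compare_nbd}), with $\alpha(n)$, and with the cutsets of Lemma \ref{lem_minimal_cutset} to force $n$ into the excluded families $2p$, $3p$, $4p$, $6$, or a prime power. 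Your proposal contains no substitute for this step, and without it the hard direction is not proved for $r\geq 3$.

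For $r=2$ your direct route does work, but not quite as you describe it: you cannot simply ``quote the computation'' from the proof of Theorem \ref{thm1}, because the cyclic cutsets constructed there (e.g.\ $O_1\cup O_n\cup O_{p_1p_2}$ in Case 2) are in general strictly larger than $\kappa(\mathcal{P}(C_n))=\phi(n)+p_1^{\alpha_1-1}p_2^{\alpha_2-1}$, so their existence says nothing about equality. The correct version of your idea is the one you sketch first: analyse $\mathcal{P}(C_n)-Y_2$ directly. Its vertex set consists of the elements of order $p_1^{\alpha_1}p_2^{b}$ ($0\leq b\leq\alpha_2-1$) and $p_1^{a}p_2^{\alpha_2}$ ($0\leq a\leq\alpha_1-1$), which form exactly two components, each a clique, of sizes $\phi(p_1^{\alpha_1})p_2^{\alpha_2-1}$ and $\phi(p_2^{\alpha_2})p_1^{\alpha_1-1}$; conditions (ii) and (iii) are precisely what guarantee both sizes are at least $3$, so the minimum cutset of Lemma \ref{lem2} is itself cyclic and $c\kappa=\kappa$ follows there. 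So the two-prime case can be completed along your lines (and is arguably cleaner than the paper's treatment), but the at-least-three-prime case needs the paper's contrapositive argument or an equivalent analysis of arbitrary minimum cutsets.
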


\begin{proof}
	We prove this theorem by taking its contraposition. That is, we prove that the cyclic vertex connectivity and vertex connectivity of $\mathcal{P}(C_n)$ are not equal if one the following holds:
	\begin{enumerate}[\rm(a)]
		\item $n$ is a prime factor,
		\item $n = p_1p_2$ for some primes $p_1 < p_2$ such that $p_1 \leq 3$,
		\item $n = 4p$ for some odd prime $p$.
	\end{enumerate}
	
By Theorem \ref{thm1}, if (a) or (b) hold, then $\mathcal{P}(C_n)$ is not cyclically separable. That is, $c\kappa(\mathcal{P}(C_n)) = \infty$. As a result, $c\kappa(\mathcal{P}(C_n)) \neq \kappa(\mathcal{P}(C_n))$. Now let (c) holds, i.e., $n = 4p$ for some odd prime $p$. Then by Lemma \ref{lem2}, $\kappa(\mathcal{P}(C_n)) = \phi(n) + 2$. Next we compute $c\kappa(\mathcal{P}(C_n))$. Note that  $C'_n$ is the union of $O_{4}$, $O_{2} $, $ O_{2p} $, and  $ O_{p}$, which are cliques of size $2$, $1$, $p-1$, and $p-1$, respectively, in $\mathcal{P}'(C_n)$. The adjacency relation between the elements of these four sets can be visualized in the following figure.
\begin{figure}[h]
	
	\begin{center}
		\begin{tikzpicture}[
			roundnode/.style={circle, draw=black},
			squarednode/.style={rectangle, draw=red!60, fill=red!5, very thick, minimum size=5mm},
			]
			\node[roundnode]      (maintopic)                              {$O_{2p}$};
			\node[roundnode]        (uppercircle)       [left=of maintopic] {$O_{2}$};
			\node[roundnode]      (rightsquare)       [right=of maintopic] {$O_{p}$};
			\node[roundnode]        (lowercircle)       [left=of uppercircle] {$O_{2^{2}}$};
			
			\draw (uppercircle.east) -- (maintopic.west);
			\draw (maintopic.east) -- (rightsquare.west);
			\draw (lowercircle.east) -- (uppercircle.west);
		\end{tikzpicture}
		\caption{$\mathcal{P}'(C_n)$} \label{fig:M1}
	\end{center}	
\end{figure}
We deduce from the figure that $O_1 \cup O_{n} \cup O_{2p}$ is the only cyclic cutset of $\mathcal{P}(C_n)$. Hence $c\kappa(\mathcal{P}(C_n)) = \phi(n) + 1 + \phi(2p)  = \phi(n) + 1 + p-1 = \phi(n) + p$. Hence, again $\kappa(\mathcal{P}(C_n)) \neq c\kappa(\mathcal{P}(C_n))$.

 Conversely, let $c\kappa(\mathcal{P}(C_n)) \neq \kappa(\mathcal{P}(C_n))$. If $\mathcal{P}(C_n)$ is a complete graph, then $n$ is a prime power. Now, suppose that $\mathcal{P}(C_n)$ is not a complete graph. Then $\mathcal{P}(C_n)$ has a minimum cutset $X$ such that $\kappa(\mathcal{P}(C_n)) = |{X}|$, and that $\mathcal{P}(C_n \setminus X)$ has at most one component containing a cycle.

 Let $\Gamma_0$ be a component of $\mathcal{P}(C_n \setminus X)$ with no cycle. We know that for any positive divisor $d$ of $n$, $C_n$ has exactly $\phi(d)$ elements of order $d$ and that those $\phi(d)$ elements form a clique in $ \mathcal{P}(C_n) $. 
 
 Let $x \in V(\Gamma_0)$. Note that $x \neq e$. Then $\phi(\text{o}(x)) \leq 2$, because, otherwise $[x]$ will induce a clique of size at least three.
  If $\text{o}(x)$ has a prime divisor $p \geq 5$, then $\text{o}(x) \geq 5$. So, $\phi(\text{o}(x)) \geq 4$, which is not possible. Then we can write $\text{o}(x)  = 2^k 3^l$, where $k\geq 0$, $l\geq 0$, and $k+l \neq 0$. If $k=0$, then $\phi(\text{o}(x))  = \phi(3^l) = 3^{l-1}2$. As $\phi(\text{o}(x)) \leq 2$, this implies $l-1 = 0$, so that $\text{o}(x)  = 3$. Whereas, if $l=0$, then $\phi(\text{o}(x))  = \phi(2^k) = 2^{k-1}$. As $\phi(\text{o}(x)) \leq 2$, this implies $k-1 = 0$ or $k-1 = 1$. So that $\text{o}(x)  = 2$ or $\text{o}(x)  = 4$. Finally, let $k \neq 0$ and $l \neq 0$. Then $\phi(\text{o}(x))  = \phi(2^k 3^l) = 2^{k-1}3^{l-1}2 = 2^{k}3^{l-1}$. Again as $\phi(\text{o}(x)) \leq 2$, we have $k = 1$ and $l-1 = 0$. Hence, $\text{o}(x)  = 6$. Thus, the possible orders of $x$ are $2,3,4$, and $6$.  
  
  Note that $\Gamma_0$ is a connected subgraph of $\mathcal{P}(C_n)$. If possible, let $x,y \in V(\Gamma_0)$, $x \sim y$, and that $\langle x \rangle \neq \langle y \rangle$. Then $[x] \cup [y]$ induces a clique of size at least three in $V(\Gamma_0)$. This contradicts the fact that $\Gamma_0$ has no cycles. As a result, we have the following cases.

 \medskip
 \noindent
 \textbf{Case 1.} $V(\Gamma_0)$ has $\phi(2)=1$ element of order $2$. Then $G$ is of even order. Let $V(\Gamma_0) = \{x\}$. Then $x$ is the unique element of order two in $G$. Then $N(x) \subseteq X$ as $\Gamma_0$ is a component of  $\mathcal{P}(C_n \setminus X)$ with only vertex $x$. However, as $N(x)$ is itself a cutset of $\mathcal{P}(C_n)$ and $X$ is a minimum cutset of $\mathcal{P}(C_n)$, we have $X = N(x)$. 

 Thus, by Lemma \ref{lem_deg_prime_power},
 \begin{align*}
 	|X| = |N(x)| = (2^{\alpha_1}-1)\, p_2^{\alpha_2} \cdots p_r^{\alpha_r}.
\end{align*}

Next let $y$ be an element of order $2^{\alpha_1}$. Then 
\begin{align*}
	|N([y])| & = 2^{\alpha_1} - 2\, \phi(2^{\alpha_1}) +  (2^{\alpha_1}-2^{\alpha_1-1}) p_2^{\alpha_2} \cdots p_r^{\alpha_r} = (2^{\alpha_1}-2^{\alpha_1-1}) p_2^{\alpha_2} \cdots p_r^{\alpha_r}.
\end{align*}
\begin{align*}
	|X| -	|N([y])|  & =  (2^{\alpha_1}-1)\, p_2^{\alpha_2} \cdots p_r^{\alpha_r} - (2^{\alpha_1}-2^{\alpha_1-1}) p_2^{\alpha_2} \cdots p_r^{\alpha_r}\\
	& =  (2^{\alpha_1-1}-1)\, p_2^{\alpha_2} \cdots p_r^{\alpha_r}.
\end{align*}
If $\alpha_1 > 1$, then $2^{\alpha_1-1} > 1$, So, $|X| > |N([y])|$, contradicting the fact that $X$ is a miniumum cutset of $\mathcal{P}(C_n)$. So $\alpha_1 = 1$.

From Lemma \ref{lem_alpha}, $\alpha(n)$ is an upper bound of $\mathcal{P}(C_n)$.
\begin{align*}
	& |X| - \alpha(n)\\  & =   \prod_{j=2}^{r} p_j^{\alpha_j} - \left[ \phi(n) + \dfrac{n}{p_r} - p_r^{\alpha_{r}-1}\phi\left( \dfrac{n}{p_r^{\alpha_{r}}} \right) \right]\\
	& = p_2^{\alpha_2} \cdots p_r^{\alpha_r} - \phi(p_2^{\alpha_2} \cdots p_r^{\alpha_r}) - 2\,p_2^{\alpha_2} \cdots p_{r-1}^{\alpha_{r-1}}p_r^{\alpha_r-1} + \phi(p_2^{\alpha_2} \cdots p_{r-1}^{\alpha_{r-1}})p_r^{\alpha_r-1}\\
	& = p_2^{\alpha_2-1} \cdots p_r^{\alpha_r-1} \left[ p_2 \cdots p_r - \phi(p_2 \cdots p_r) - 2\,p_2 \cdots p_{r-1} + \phi(p_2 \cdots p_{r-1}) \right]\\
	& = p_2^{\alpha_2-1} \cdots p_r^{\alpha_r-1} \left[ p_2 \cdots p_{r-1} (p_r - 2) - \phi(p_2 \cdots p_{r-1}) (p_r - 2) \right]\\
	& = p_2^{\alpha_2-1} \cdots p_r^{\alpha_r-1} (p_r - 2) \left[ p_2 \cdots p_{r-1}  - \phi(p_2 \cdots p_{r-1}) \right].
\end{align*}

Hence, if $r \geq 3$, then $ |X| > \alpha(n) $. Since this is a contradiction, we have $r = 2$.

So, we can write $n = 2 p_2^{\alpha_2}$. If possible, suppose that $\alpha_2 \geq 2$. By Lemma \ref{lem_minimal_cutset}, $Y := O_n \cup H_{p_2^{\alpha_2-1}}$ is a minimum cutset of $\mathcal{P}(C_n)$. We have $C_n - Y = O_{p_2^{\alpha_2}} \cup \left( \displaystyle\bigcup_{i=0}^{\alpha_2-1} O_{2 p_2^{i}} \right)$. We observe that no element of $O_{p_2^{\alpha_2}} $ is adjacent to any element of $ \displaystyle\bigcup_{i=0}^{\alpha_2-1} O_{2 p_2^{i}} $ in $\mathcal{P}(C_n) - Y$. As $\alpha_2 \geq 2$, both $O_{p_2^{\alpha_2}}$ and $ \displaystyle\bigcup_{i=0}^{\alpha_2-1} O_{2 p_2^{i}} $ induce cliques of sizes at least three. Hence, $Y$ is a cyclic cutset of $\mathcal{P}(C_n)$. Whereas, because $Y$ is a minimum cutset $|Y| = |X|$. So, $c\kappa(\mathcal{P}(C_n)) = \kappa(\mathcal{P}(C_n))$, a contradiction. Therefore, we conclude that $n = 2 p_2$.

 \medskip
\noindent
\textbf{Case 2.} $\Gamma_0$ has $\phi(4) = 2$ vertices of order $4$. Let $x$ be an element of order $4$ in $C_n$. Then $\Gamma_0$ is induced by $[x]$, and that $N([x]) \subseteq X$. However, as $ N([x]) $ is itself a cutset of $\mathcal{P}(C_n)$ and $X$ is a minimum cutset of $\mathcal{P}(C_n)$, we have $X = N([x])$. 
If possible, let $ \alpha_1 > 2$. Then for any $y \in C_n$ of order $2^{\alpha_1}$,  $|N([x])| > |N([y])|$ by Lemma \ref{compare_nbd}. As this is a contradiction, we have $\alpha_1 = 2$. Thus, by Lemma \ref{lem_deg_prime_power},
\begin{align*}
	|X| = N([x]) = 2 p_2^{\alpha_2} \cdots p_r^{\alpha_r}.
\end{align*}

From Lemma \ref{lem_alpha}, $\alpha(n)$ is an upper bound of $\mathcal{P}(C_n)$. If $r \geq 3$, then
\begin{align*}
	& |X| - \alpha(n)\\  & =   2\prod_{j=2}^{r} p_j^{\alpha_j} - \left[ \phi(n) + \dfrac{n}{p_r} - p_r^{\alpha_{r}-1}\phi\left( \dfrac{n}{p_r^{\alpha_{r}}} \right) \right]\\
	& = 2p_2^{\alpha_2} \cdots p_r^{\alpha_r} - 2\phi(p_2^{\alpha_2} \cdots p_r^{\alpha_r}) - 4\,p_2^{\alpha_2} \cdots p_{r-1}^{\alpha_{r-1}}p_r^{\alpha_r-1} + 2\phi(p_2^{\alpha_2} \cdots p_{r-1}^{\alpha_{r-1}})p_r^{\alpha_r-1}\\
	& = 2p_2^{\alpha_2-1} \cdots p_r^{\alpha_r-1} \left[ p_2 \cdots p_r - \phi(p_2 \cdots p_r) - 2\,p_2 \cdots p_{r-1} + \phi(p_2 \cdots p_{r-1}) \right]\\
	& = 2p_2^{\alpha_2-1} \cdots p_r^{\alpha_r-1} \left[ p_2 \cdots p_{r-1} (p_r - 2) - \phi(p_2 \cdots p_{r-1}) (p_r - 2) \right]\\
	& = 2p_2^{\alpha_2-1} \cdots p_r^{\alpha_r-1} (p_r - 2) \left[ p_2 \cdots p_{r-1}  - \phi(p_2 \cdots p_{r-1}) \right].
\end{align*}

Hence, if $r \geq 3$, then $ |X| > \alpha(n) $. Since this is a contradiction, we have $r = 2$.

So, we can write $n = 4 p_2^{\alpha_2}$. If possible, suppose that $\alpha_2 \geq 2$. By Lemma \ref{lem_minimal_cutset}, $Y := O_n \cup H_{2p_2^{\alpha_2-1}}$ is a minimum cutset of $\mathcal{P}(C_n)$. We have $C_n - Y = O_{p_2^{\alpha_2}} \cup \left( \displaystyle\bigcup_{i=0}^{\alpha_2-1} O_{4 p_2^{i}} \right)$. We observe that no element of $O_{p_2^{\alpha_2}} $ is adjacent to any element of $ \displaystyle\bigcup_{i=0}^{\alpha_2-1} O_{4 p_2^{i}} $ in $\mathcal{P}(C_n) - Y$. As $\alpha_2 \geq 2$, both $O_{p_2^{\alpha_2}}$ and $ \displaystyle\bigcup_{i=0}^{\alpha_2-1} O_{4 p_2^{i}} $ induce cliques of sizes at least three. Hence, $Y$ is a cyclic cutset of $\mathcal{P}(C_n)$. Whereas, because $Y$ is a minimum cutset $|Y| = |X|$. So, $c\kappa(\mathcal{P}(C_n)) = \kappa(\mathcal{P}(C_n))$, a contradiction. Therefore, we conclude that $n = 4 p_2$.

 \medskip
\noindent
\textbf{Case 3.} $\Gamma_0$ has $\phi(3)$ vertices. Let $x$ be an element of order $3$ in $C_n$. Then $\Gamma_0$ is induced by $[x]$, and that $N([x]) \subseteq X$. However, as $ N([x]) $ is itself a cutset of $\mathcal{P}(C_n)$ and $X$ is a minimum cutset of $\mathcal{P}(C_n)$, we have $X = N([x])$. Suppose $\alpha = \alpha_1$ if $p_1=3$, and $\alpha = \alpha_2$ if $p_2=3$.
If possible, let $ \alpha > 1$. Then for any $y \in C_n$ of order $3^{\alpha}$,  $|N([x])| > |N([y])|$ by Lemma \ref{compare_nbd}. As this is a contradiction, we have $\alpha = 1$. 

We first consider the subcase when $p_1=2$. Then $n = 2^{\alpha_1} \cdot 3 \cdot p_3^{\alpha_3} \cdots p_r^{\alpha_r}$. If $z$ is an element of order $2^{\alpha_1}$ in $C_n$, then 
\begin{align*}
	|N([x])| -	|N([z])|  & =  3 - 2 \cdot \phi \left(3 \right) + (3-1) 2^{\alpha_1} p_3^{\alpha_3} \cdots p_r^{\alpha_r} - (2^{\alpha_1} - 2^{\alpha_1-1})\, 3 \, p_3^{\alpha_3} \cdots p_r^{\alpha_r}\\
	& =   2^{\alpha_1+1} p_3^{\alpha_3} \cdots p_r^{\alpha_r} -  3 \cdot 2^{\alpha_1-1} p_3^{\alpha_3} \cdots p_r^{\alpha_r} - 1\\
  & =    2^{\alpha_1-1} p_3^{\alpha_3} \cdots p_r^{\alpha_r} - 1.
\end{align*}
So, if $\alpha_1 >1$ or that $r > 2$, then $|N([x])| >	|N([z])| $, a contradiction. Then $\alpha_1 =1$ and $r = 2$. Hence $n = 2 \cdot 3 = 6$.

Next we consider the subcase when $p_1=3$. 

By Lemma \ref{lem_alpha}, $\alpha(n)$ is an upper bound of $\kappa(\mathcal{P}(C_n))$. Then we have
\begin{align*}
	& |X| - \alpha(n)\\  & =  3 - 2 \cdot \phi \left(3 \right) + (3-1) \prod_{j=2}^{r} p_j^{\alpha_j} - \left[ \phi(n) + \dfrac{n}{p_r} - p_r^{\alpha_{r}-1}\phi\left( \dfrac{n}{p_r^{\alpha_{r}}} \right) \right]\\
	& =2\, p_2^{\alpha_2} \cdots p_r^{\alpha_r} - 2\,\phi(p_2^{\alpha_2} \cdots p_r^{\alpha_r}) - 3\,p_2^{\alpha_2} \cdots p_{r-1}^{\alpha_{r-1}}p_r^{\alpha_r-1} + 2\,\phi(p_2^{\alpha_2} \cdots p_{r-1}^{\alpha_{r-1}})p_r^{\alpha_r-1} - 1\\
	& = p_2^{\alpha_2-1} \cdots p_r^{\alpha_r-1} \left[ 2\, p_2 \cdots p_r - 2\phi(p_2 \cdots p_r) - 3\,p_2 \cdots p_{r-1} + 2\phi(p_2 \cdots p_{r-1}) \right] - 1\\
	& = p_2^{\alpha_2-1} \cdots p_r^{\alpha_r-1} \left[ p_2 \cdots p_{r-1} (2\, p_r - 3) - \phi(p_2 \cdots p_{r-1}) (2p_r - 4) \right] -1\\
	& > p_2^{\alpha_2-1} \cdots p_r^{\alpha_r-1} \left[ \phi(p_2 \cdots p_{r-1}) (2\, p_r - 3) - \phi(p_2 \cdots p_{r-1}) (2p_r - 4) \right] -1\\
	& = \phi(p_2^{\alpha_2} \cdots p_{r-1}^{\alpha_{r-1}}) p_r^{\alpha_r-1} -1.
\end{align*}
Thus, if $r \geq 3$ or if $\alpha_r > 1$, then $|X| > \alpha(n)$, a contradiction. Hence $r =2$ and $\alpha_r = 1$, so that $n = 3 p_2$.

\medskip
\noindent
\textbf{Case 4.} $\Gamma_0$ has $\phi(6) = 2$ vertices. In this case, $n > 6$. Let $x$ be an element of order $6$ in $C_n$. Then $\Gamma_0$ is induced by $[x]$, and that $N([x]) \subseteq X$. However, as $ N([x]) $ is itself a cutset of $\mathcal{P}(C_n)$ and $X$ is a minimum cutset of $\mathcal{P}(C_n)$, we have $X = N([x])$. Then $n = 2^{\alpha_1} 3^{\alpha_2} p_3^{\alpha_3} \dots p_r^{\alpha_r}$. We have
\begin{align*}
|X| & = 6 - 2 \cdot \phi \left(6 \right) + \underset{d\, | \, \frac{n}{6}} \sum \phi \left( \frac{n}{d} \right)\\
 &  = 2 +  \underset{d\, | \, {2^{\alpha_1-1} 3^{\alpha_2-1} \prod_{j=3}^{r} p_j^{\alpha_j} }} \sum \phi \left( \frac{p_1^{\alpha_1}p_2^{\alpha_2} \cdots p_r^{\alpha_r}}{d} \right)\\
	& = 2+ \left[ \sum_{i=0}^{\alpha_1 - 1} \phi \left(2^{\alpha_1-i}\right) \right] \left[ \sum_{i=0}^{\alpha_2 - 1} \phi \left(3^{\alpha_2-i}\right) \right] \prod_{j=3}^{r} \left\{ \sum_{i=0}^{\alpha_j} \phi \left({p_j^{\alpha_j-i}}\right) \right\} \\
	& = 2+ (2^{\alpha_1} - 1) (3^{\alpha_2} -1) \prod_{j=3}^{r} p_j^{\alpha_j}.
\end{align*}

If possible, let $r \geq 3$. Then for any $y \in C_n$ of order $2^{\alpha_1}3^{\alpha_2}$, 
\begin{align*}
	N([y]) & = 2^{\alpha_1}3^{\alpha_2} - 2 \cdot \phi \left( 2^{\alpha_1}3^{\alpha_2} \right) + \underset{d\, | \, \frac{n}{2^{\alpha_1}3^{\alpha_2}}} \sum \phi \left( \frac{n}{d} \right)\\
	&  = 2^{\alpha_1}3^{\alpha_2-1} +  \underset{d\, | \, {\prod_{j=3}^{r} p_j^{\alpha_j} }} \sum \phi \left( \frac{p_1^{\alpha_1}p_2^{\alpha_2} \cdots p_r^{\alpha_r}}{d} \right)\\
	& = 2^{\alpha_1}3^{\alpha_2-1} + \phi\left( 2^{\alpha_1}3^{\alpha_2} \right) \prod_{j=3}^{r} \left\{ \sum_{i=0}^{\alpha_j} \phi \left({p_j^{\alpha_j-i}}\right) \right\} \\
	& = 2^{\alpha_1}3^{\alpha_2-1} + \phi\left( 2^{\alpha_1}3^{\alpha_2} \right) \prod_{j=3}^{r} p_j^{\alpha_j}.
\end{align*}

\begin{align*}
	|X| - N([y]) & = 2+ (2^{\alpha_1} - 1) (3^{\alpha_2} -1) \prod_{j=3}^{r} p_j^{\alpha_j} - \left[ 2^{\alpha_1}3^{\alpha_2-1} +  \phi\left( 2^{\alpha_1}3^{\alpha_2} \right) \prod_{j=3}^{r} p_j^{\alpha_j} \right]\\
  & = 2 - 2^{\alpha_1}3^{\alpha_2-1} + [(2^{\alpha_1} - 1) (3^{\alpha_2} -1) - \phi\left( 2^{\alpha_1}3^{\alpha_2} \right)] \prod_{j=3}^{r} p_j^{\alpha_j}.
\end{align*}

Since $(2^{\alpha_1} - 1) (3^{\alpha_2} -1) \geq \phi\left( 2^{\alpha_1}3^{\alpha_2} \right)$, 
\begin{align*}
	|X| - N([y]) & > 2 - 2^{\alpha_1}3^{\alpha_2-1} + 3[(2^{\alpha_1} - 1) (3^{\alpha_2} -1) - \phi\left( 2^{\alpha_1}3^{\alpha_2} \right)]\\	
	& = 2 - 2^{\alpha_1}3^{\alpha_2-1} + 3(  2^{\alpha_1} 3^{\alpha_2} -  2^{\alpha_1} - 3^{\alpha_2} + 1 - 2^{\alpha_1} 3^{\alpha_2-1} ) \\
  & = 5 + 5 \cdot 2^{\alpha_1} 3^{\alpha_2-1}  -  3 \cdot 2^{\alpha_1} - 3^{\alpha_2+1}   \\
 & = 5 + 3^{\alpha_2-1} (5 \cdot 2^{\alpha_1} - 9)   -  3 \cdot 2^{\alpha_1}    \\
 & \geq 5 + 5 \cdot 2^{\alpha_1} - 9   -  3 \cdot 2^{\alpha_1}    \\
 & \geq  2^{\alpha_1+1} - 4.
\end{align*}

As $ 2^{\alpha_1+1} \geq 4$, we get $|X| > N([y])$. Since this is a contradiction, $r = 2$. Accordingly, we have $n = 2^{\alpha_1} 3^{\alpha_2}$. As $n > 6$, we have $\alpha_1 > 1$ or $ \alpha_2 > 1$.

Then
 $$|X| = 2 + (2^{\alpha_1} - 1) (3^{\alpha_2} - 1). $$
If $y$ is an element of order $2^{\alpha_1}$, then
\begin{align*}
	|N([y])| & = (2^{\alpha_1} - 2^{\alpha_1-1})3^{\alpha_2}.
\end{align*}

Thus
\begin{align*}
|X| -	|N([y])| & = 2 + (2^{\alpha_1} - 1) (3^{\alpha_2} - 1) - (2^{\alpha_1} - 2^{\alpha_1-1})3^{\alpha_2}\\
& = 3 - 2^{\alpha_1} - 3^{\alpha_2} + 2^{\alpha_1-1}3^{\alpha_2}\\
& = 3 - 2^{\alpha_1} + 3^{\alpha_2} (2^{\alpha_1-1} - 1)\\
& \geq 3 - 2^{\alpha_1} + 3 (2^{\alpha_1-1} - 1)\\
& = 3 \cdot 2^{\alpha_1-1} - 2^{\alpha_1}\\
& =  2^{\alpha_1-1}.
\end{align*}
As $2^{\alpha_1-1} > 0$, we have $|X| >	|N([y])|$. As, this is again a contradiction, Case 4 is not possible. 
\end{proof}

\subsection{Noncyclic groups}

For $n \geq 3$, the \emph{dihederal group} $D_{2n}$ of order $2n$ is a non-abelain group of order $2n$ with presentation
\begin{equation}\label{DihedralEq}
D_{2n} = \langle a, b \; : \; a^{n} = b^2 = e, \; ab = ba^{-1} \rangle.
\end{equation}

It is known that every element of $D_{2n} {\setminus} \langle a \rangle$ is of the form $a^ib$ for some $0 \leq i \leq n-1$, and that $ \langle a^ib \rangle =   \{ e, a^ib \}  $. In particular, 
$$D_{2n} = \langle a \rangle \cup \bigcup\limits_{ i=  0}^{n-1} \langle a^ib \rangle.$$
From the structure of $D_{2n}$, 
\begin{equation}\label{dihedral2}
\mathcal{P}(D_{2n}) = \mathcal{P}(\{ e \}) \vee \left[ \mathcal{P}(\langle a \rangle^*) + \mathcal{P}(\{ b \}) + \mathcal{P}(\{ ab \}) + \dots + \mathcal{P}(\{ a^{n-1}b \}) \right].
\end{equation}

This implies
$$\mathcal{P}(D_{2n}^*) =  \mathcal{P}(\langle a \rangle^*) + \mathcal{P}(\{ b \}) + \mathcal{P}(\{ ab \}) + \dots + \mathcal{P}(\{ a^{n-1}b \}),$$ and hence
\begin{align}\label{dihedral1}
\mathcal{P}(D_{2n}^*) \cong  \mathcal{P}(C_n^*) + nK_1.
\end{align}

\begin{theorem}
	\label{thmdihedral}
	For any integer $n \geq 3$, $\mathcal{P}(D_{2n})$ is cyclically separable if and only if the following conditions hold:
	\begin{enumerate}[\rm(i)]
		\item $n$ has at least two prime factors,
		\item $n \neq p_1p_2$ for any primes $p_1 < p_2$ such that $p_1 \leq 3$,
		\item $n \neq 12$.
	\end{enumerate} 
\end{theorem}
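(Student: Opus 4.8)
The plan is to reduce everything to Theorem~\ref{thm1}, since the three conditions asserted here are word-for-word those of Theorem~\ref{thm1}. The only thing that really needs proving is the equivalence
\[
\mathcal P(D_{2n})\text{ is cyclically separable}\ \Longleftrightarrow\ \mathcal P(C_n)\text{ is cyclically separable},
\]
and this comes straight out of the decomposition~(\ref{dihedral2}). First I would record the three structural facts it encodes: the identity $e$ is adjacent to every other vertex of $\mathcal P(D_{2n})$; the subgraph induced on $\langle a\rangle$ is $\mathcal P(C_n)$; and for each reflection $a^ib$ we have $\langle a^ib\rangle=\{e,a^ib\}$, so the only neighbour of $a^ib$ in $\mathcal P(D_{2n})$ is $e$.

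For the direction ``$\mathcal P(D_{2n})$ cyclically separable $\Rightarrow$ $\mathcal P(C_n)$ cyclically separable'', I would take a cyclic cutset $S$ of $\mathcal P(D_{2n})$, so $\mathcal P(D_{2n})-S$ has two components $\Gamma_1,\Gamma_2$ each carrying a cycle. Because $e$ is a dominant vertex, $\mathcal P(D_{2n})-S$ is connected as soon as $e\notin S$; hence $e\in S$. Deleting $e$ then strips every remaining reflection $a^ib$ of its only neighbour, so each such vertex is isolated in $\mathcal P(D_{2n})-S$ and lies on no cycle. Consequently $\Gamma_1$ and $\Gamma_2$ are entirely contained in $\langle a\rangle\setminus S$, which means $S\cap\langle a\rangle$ is a cutset of $\mathcal P(C_n)$ whose complement has two cycle-bearing components --- a cyclic cutset of $\mathcal P(C_n)$.

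For the converse, let $T$ be a cyclic cutset of $\mathcal P(C_n)$; since $e$ is adjacent to every other vertex of $\mathcal P(C_n)$, necessarily $e\in T$. Regarding $T$ as a subset of $\langle a\rangle\subseteq D_{2n}$, I would delete $T$ from $\mathcal P(D_{2n})$: the part induced on $\langle a\rangle\setminus T$ is exactly $\mathcal P(C_n)-T$, which already has two components containing cycles, while every reflection $a^ib\notin T$ is isolated in $\mathcal P(D_{2n})-T$ because its unique neighbour $e$ lies in $T$. Thus $T$ is a cyclic cutset of $\mathcal P(D_{2n})$. Combining the two directions with Theorem~\ref{thm1} finishes the proof. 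I do not expect a genuine obstacle: the whole argument hinges on the two elementary observations that $e$ must belong to any cutset of $\mathcal P(D_{2n})$ and that, once $e$ is removed, the reflections can only contribute isolated vertices --- so cycles are confined to the rotation subgroup, where $\mathcal P(C_n)$ already governs the behaviour. Equivalently, one can phrase this through~(\ref{dihedral1}): passing from $\mathcal P(C_n)$ to $\mathcal P(D_{2n})$ amounts to adding $n$ isolated vertices below the dominant vertex $e$, which affects neither the existence of a cyclic cutset nor the cycle structure of the relevant components.
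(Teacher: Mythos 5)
Your proposal is correct and follows essentially the same route as the paper: reduce to Theorem~\ref{thm1} by proving that $\mathcal P(D_{2n})$ is cyclically separable iff $\mathcal P(C_n)$ is, using the decomposition (\ref{dihedral2})/(\ref{dihedral1}) and the fact that after the (necessarily deleted) dominant vertex $e$ is removed, the reflections $a^ib$ are isolated, so all cycles live in the rotation subgroup. Your write-up is in fact slightly more explicit than the paper's (e.g.\ in noting $e\in S$ and that the two cycle-bearing components survive as components of the induced subgraph on $\langle a\rangle\setminus S$), but the argument is the same.
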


\begin{proof}
In view of Theorem \ref{thm1}, it is enough to prove that $\mathcal{P}(D_{2n})$ is cyclically separable if and only if $\mathcal{P}(C_n)$ is cyclically separable. In this proof, we refer to  the presentation (\ref{DihedralEq}) of $D_{2n}$.  From (\ref{dihedral1}), $\mathcal{P}(D_{2n}^*)$ is a disconnected graph and the components are $ \mathcal{P}(\langle a \rangle^*) $ and $n$ isolated vertices. 

We first assume that $\mathcal{P}(C_n)$ is cyclically separable. Then, as $\mathcal{P}(C_n)$ and $ \mathcal{P}(\langle a \rangle) $  are graph isomorphic, $\mathcal{P}(\langle a \rangle)$ is also cyclically separable. Thus $ \mathcal{P}(\langle a \rangle) $ has a cyclic cutset $S$. So, $\mathcal{P}(D_{2n}) - S$ is disconnected and has at least two components containing cycles. Hence, $\mathcal{P}(D_{2n})$ is cyclically separable.

Conversely, let $\mathcal{P}(D_{2n})$ be cyclically separable. Then it has a cyclic cutset, say $T$. Thus $\mathcal{P}(D_{2n}) - T = \mathcal{P}(\langle a \rangle^*\setminus T) + \mathcal{P}(\{ b, ab, \dots, a^{n-1}b \}\setminus T).$ We notice that $\mathcal{P}(\{ b, ab, \dots, a^{n-1}b \}\setminus T)$ is either empty or consists entirely of isolated vertices. Hence, as $\mathcal{P}(D_{2n}) - T$ has at least two components containing cycles, $\mathcal{P}(\langle a \rangle^*\setminus T)$ must contain at least two components containing cycles. Since $\mathcal{P}(\langle a \rangle\setminus T) = \mathcal{P}(\langle a \rangle^*\setminus T)$ and $\mathcal{P}(C_n)$ and $ \mathcal{P}(\langle a \rangle) $  are graph isomorphic, $\mathcal{P}(C_n)$ is cyclically separable.
 \end{proof}

For any positive integer $n \geq 2$, the \emph{dicyclic group} $Q_{4n}$ is a finite group of order $4n$ having presentation
\begin{equation}\label{DicyclicEq}
	Q_{4n}=\left \langle a, b \mid a^{2n}=e, a^n=b^2, ab=ba^{-1} \right \rangle,
\end{equation}
where $e$ is the identity element of $Q_{4n}$.

We first show by induction that $(a^ib)^2=a^n$ for all $0 \leq i \leq 2n-1$. As $b^2=a^n$, it is trivially true for $i=0$. Let it be true for $i=k$, where $0 \leq k \leq 2n-2$. Then for $i=k+1$, $(a^{k+1}b)^2=a^{k+1}ba^{k+1}b=a^{k}ba^{-1}a^{k+1}b=(a^kb)^2=a^n$, by induction hypothesis.

Now for any $0 \leq i \leq n-1$, we have $(a^ib)^3=a^na^ib=a^{n+i}b$ and $(a^{n+i}b)^3=a^na^{n+i}b=a^ib$. Thus
\begin{equation}\label{EqQn}
	\langle a^ib \rangle=\langle a^{n+i}b \rangle=\{ e, a^ib, a^n, a^{n+i}b \} \text{ for all } 0 \leq i \leq n-1.
\end{equation}

Therefore, any element of $Q_{4n} - \langle a \rangle$ can be written as $a^ib$ for some $0 \leq i \leq 2n-1$. Subsequently, we have
\begin{equation}\label{EqQn2}
	Q_{4n}=\langle a \rangle \cup \bigcup \limits_{i=0}^{n-1}\langle a^ib \rangle
\end{equation}

\begin{theorem}
	\label{thmdicyclic}
	For any integer $n \geq 3$, $\mathcal{P}(Q_{4n})$ is cyclically separable if and only if the following conditions hold:
	\begin{enumerate}[\rm(i)]
		\item $n$ is not a power of $2$,
		\item $n$ is not a prime number,
		\item $n \neq 6$.
	\end{enumerate} 
\end{theorem}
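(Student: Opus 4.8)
The plan is to reduce the statement about $\mathcal{P}(Q_{4n})$ to the already-established Theorem~\ref{thm1} about $\mathcal{P}(C_n)$, in the same spirit as the proof of Theorem~\ref{thmdihedral}, but with extra care because the ``new'' cyclic subgroups $\langle a^ib\rangle = \{e, a^ib, a^n, a^{n+i}b\}$ have order $4$, not $2$. First I would record from (\ref{EqQn}) and (\ref{EqQn2}) the structure of $\mathcal{P}(Q_{4n})$: the identity $e$ is joined to everything, the unique involution $a^n$ is joined to everything, and once we delete $\{e, a^n\}$ the graph $\mathcal{P}(Q_{4n}) - \{e,a^n\}$ splits as $\mathcal{P}(\langle a\rangle \setminus\{e,a^n\})$ together with $n$ disjoint edges, one edge $\{a^ib,\,a^{n+i}b\}$ for each $0\le i\le n-1$ (each such pair forms a $K_2$ and is adjacent to nothing outside $\{e,a^n\}$). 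Equivalently, writing $H=\langle a\rangle\cong C_{2n}$, we have $\mathcal{P}(Q_{4n}^*)\cong \mathcal{P}(C_{2n}^*) + nK_2$ after removing $a^n$ suitably — the key point being that the components outside $\mathcal{P}(H)$ are edges, hence acyclic.

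Next I would prove the biconditional $\mathcal{P}(Q_{4n})$ is cyclically separable $\iff$ $\mathcal{P}(C_{2n})$ is cyclically separable. For the forward-trivial direction: if $\mathcal{P}(C_{2n})$ (equivalently $\mathcal{P}(H)$) has a cyclic cutset $S$, then $S$ is a cyclic cutset of $\mathcal{P}(Q_{4n})$ as well, since removing $S$ still leaves the two cycle-containing components inside $\mathcal{P}(H)$ (the extra $K_2$'s and the vertices $a^n,e$ only add to $S$ or to the components, never merging them in a way that kills a cycle). For the converse: suppose $T$ is a cyclic cutset of $\mathcal{P}(Q_{4n})$. Since $e$ and $a^n$ are universal vertices, $\{e,a^n\}\subseteq T$, so $\mathcal{P}(Q_{4n}) - T$ is an induced subgraph of $\mathcal{P}(H\setminus\{e,a^n\}) + nK_2$. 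Any $K_2$-component contains no cycle, so the (at least) two cycle-containing components of $\mathcal{P}(Q_{4n}) - T$ must both lie inside $\mathcal{P}(H\setminus T)$; hence $T\cap H$ is a cyclic cutset of $\mathcal{P}(H)\cong\mathcal{P}(C_{2n})$. Combining both directions gives the equivalence, and then by Theorem~\ref{thm1} applied to $C_{2n}$, $\mathcal{P}(Q_{4n})$ is cyclically separable iff (i) $2n$ has at least two prime factors, (ii) $2n\neq p_1p_2$ with $p_1\in\{2,3\}$, and (iii) $2n\neq 12$.

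The last step is purely arithmetic translation: rewrite the conditions on $2n$ as conditions on $n$. Condition (i), ``$2n$ has at least two prime factors,'' holds iff $n$ is not a power of $2$ — this gives item (i) of the theorem. For (ii): $2n = p_1p_2$ with $p_1<p_2$ and $p_1\in\{2,3\}$ forces $p_1=2$ (since $2\mid 2n$), so $2n=2p_2$, i.e. $n=p_2$ is an odd prime; conversely if $n$ is an odd prime then $2n=2n$ is of this excluded form. Also $2n=2p_2$ with $p_2=3$ gives $n=3$, which is an odd prime, already covered. So excluding these is exactly ``$n$ is not a prime'' — item (ii); note $n=2$ is a power of $2$ and already excluded by (i), so requiring $n$ prime with no parity restriction is consistent. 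For (iii): $2n=12 \iff n=6$ — item (iii). Assembling, $\mathcal{P}(Q_{4n})$ is cyclically separable iff $n$ is not a power of $2$, $n$ is not prime, and $n\neq 6$, as claimed.

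The only place needing genuine care — and where I would be most careful in writing it up — is the converse direction of the biconditional: one must be sure that after deleting $T$, a cycle-containing component cannot straddle $\mathcal{P}(H\setminus T)$ and some of the $a^ib$ vertices. This is handled cleanly by the observation that every vertex $a^ib$ has $N(a^ib) = \{e, a^n, a^{n+i}b\}$, so once $\{e,a^n\}\subseteq T$ each surviving $a^ib$ has degree at most $1$ in $\mathcal{P}(Q_{4n}) - T$ and thus lies in an acyclic component; this is the analogue of the ``$\mathcal{P}(\{b,\dots,a^{n-1}b\}\setminus T)$ consists of isolated vertices'' remark in the dihedral proof, with ``isolated vertices'' replaced by ``isolated edges,'' and it suffices equally well since $K_2$ contains no cycle.
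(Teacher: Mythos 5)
Your proposal follows the paper's route exactly: decompose $\mathcal{P}(Q_{4n})$ into $\mathcal{P}(\langle a\rangle)$ plus the order-$4$ elements $a^ib$, prove the biconditional with $\mathcal{P}(C_{2n})$, and translate the conditions of Theorem~\ref{thm1} from $2n$ to $n$. The arithmetic translation is fine, and so is the forward direction (your parenthetical is the right argument there: the vertices $a^ib$ meet $\langle a\rangle$ only through $e$ and $a^n$, so they can never merge two components of $\mathcal{P}(\langle a\rangle)-S$). But there is a genuine gap, and it sits exactly at the step you flagged as delicate: the claim that the involution $a^n$ is a universal vertex of $\mathcal{P}(Q_{4n})$, from which you conclude $\{e,a^n\}\subseteq T$ for every cyclic cutset $T$. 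This is false whenever $n$ is not a power of $2$: if $x\in\langle a\rangle$ has odd order greater than $1$, then $a^n\notin\langle x\rangle$ and $x\notin\langle a^n\rangle$, so $x$ and $a^n$ are not adjacent. Without $a^n\in T$, your converse collapses: a surviving pair $a^ib,\,a^{n+i}b$ together with $a^n$ forms a triangle (since $N(a^ib)=\{e,a^n,a^{n+i}b\}$), so the component of $a^n$ contains cycles that do not come from $\mathcal{P}(\langle a\rangle)$, and one can no longer conclude that $T\cap\langle a\rangle$ is a cyclic cutset of $\mathcal{P}(C_{2n})$.

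This is not a repairable technicality, because the biconditional you (and the paper's own proof, which asserts the same universality of $e$ and $a^n$) rely on actually fails. Take $n=p$ an odd prime with $p\geq 5$, and let $T$ consist of $e$ together with the $\phi(2p)=p-1$ generators of $\langle a\rangle$. In $\mathcal{P}(Q_{4p})-T$, the $p-1$ elements of order $p$ form a clique $K_{p-1}$, and this is a full component since their only other neighbours were $e$ and the generators; the remaining vertices, namely $a^p$ and the $2p$ elements $a^ib$, form a second component consisting of $p$ triangles sharing the vertex $a^p$. Both components contain cycles, so $T$ is a cyclic cutset and $\mathcal{P}(Q_{4p})$ is cyclically separable, even though $\mathcal{P}(C_{2p})$ is not cyclically separable by Theorem~\ref{thm1}; this contradicts condition (ii) of the statement. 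So your write-up reproduces the paper's argument faithfully, but the gap is real and is inherited from it: the converse direction would need to handle cyclic cutsets avoiding $a^n$, and in fact no argument can close it as stated, since the claimed equivalence with $\mathcal{P}(C_{2n})$ (and hence the theorem itself) fails for $n$ an odd prime at least $5$.
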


\begin{proof}
We first observe that $n$ is not a power of $2$ if and only if $n$ has a prime factor $p > 2$, and this is equivalent to the statement that $2n$ has at least two prime factors. It is trivial to see that $n$ is not a prime number if and only if  $2n \neq 2p$ for any prime $p > 2$, and that $n \neq 6 \Leftrightarrow 2n \neq 12$. Thus, in view of Theorem \ref{thm1}, $\mathcal{P}(C_{2n})$ is cyclically separable if and only if the following conditions hold:
	\begin{enumerate}[\rm(1)]
		\item $n$ is not a power of $2$,
		\item $n$ is not a prime number,
		\item $n \neq 6$.
	\end{enumerate}
	So, to prove the theorem, it is enough to show that $\mathcal{P}(Q_{4n})$ is cyclically separable if and only if $\mathcal{P}(C_{2n})$ is cyclically separable.  In this proof, we refer to  the presentation (\ref{DicyclicEq}) of $Q_{4n}$. 
	 We first assume that $\mathcal{P}(C_{2n})$ is cyclically separable. Since $\langle a \rangle$ is a cyclic subgroup of order $2n$, $\mathcal{P}(\langle a \rangle)$ is also is cyclically separable. Let $S$ be a cyclic cutset of $\mathcal{P}(\langle a \rangle)$. Since $e$ and $a^n$ are adjacent to all vertices of $\mathcal{P}(\langle a \rangle)$, we have  $\{e, a^n\} \subset S$. Hence $\mathcal{P}(Q_{4n}) - S$ is disconnected and 
	 $$\mathcal{P}(Q_{4n}) - S = (\mathcal{P}(\langle a \rangle) - S) + (\mathcal{P}(\{b, ab, \dots, a^{2n-1}b\}\setminus S)).$$ 
	 Since $\mathcal{P}(\langle a \rangle) - S$ has at least two components containing cycles, so does $\mathcal{P}(Q_{4n}) - S$. Hence $\mathcal{P}(Q_{4n})$ is cyclically separable.
	 
	 Next, to prove the converse, let $\mathcal{P}(Q_{4n})$ be cyclically separable. Let $T$ be a cyclic cutset of $\mathcal{P}(Q_{4n})$. Since $e$ and $a^n$ are adjacent to all vertices of $\mathcal{P}(Q_{4n})$, we have  $\{e, a^n\} \subset T$. In fact, we have 
	 $$\mathcal{P}(Q_{4n}) - T = (\mathcal{P}(\langle a \rangle) - T) + (\mathcal{P}(\{b, ab, \dots, a^{2n-1}b\}\setminus T)).$$ 
	 We observe that $\mathcal{P}(\{b, ab, \dots, a^{2n-1}b\}\setminus T)$ contains no cycles. In fact, it is either empty, or disjoint union of $K_1$'s or $K_2$'s or either. Thus, since $\mathcal{P}(Q_{4n}) - T$ has at least two components containing cycles, so does $\mathcal{P}(\langle a \rangle) - T$. Then $T \cap \langle a \rangle$ is cyclic cutset of $\mathcal{P}(\langle a \rangle)$, and so it is cyclically separable. Finally, since $\langle a \rangle$ is a cyclic subgroup of order $2n$, more generally, $\mathcal{P}(C_{2n})$ is cyclically separable.
\end{proof}

\begin{theorem}
	If $G$ is a dihedral or dicyclic group, then $\kappa(\mathcal{P}(G)) \neq c\kappa(\mathcal{P}(G))$.
\end{theorem}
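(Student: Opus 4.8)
The plan is to pin down that $\kappa(\mathcal{P}(G))$ is tiny --- it equals $1$ when $G=D_{2n}$ and is at most $2$ when $G=Q_{4n}$ --- and then show that every cyclic cutset of $\mathcal{P}(G)$ is forced to be much larger. I will work with the presentations \eqref{DihedralEq} and \eqref{DicyclicEq}, using that $\langle a^ib\rangle=\{e,a^ib\}$ in $D_{2n}$ while $\langle a^ib\rangle=\{e,a^ib,a^n,a^{n+i}b\}$ in $Q_{4n}$ by \eqref{EqQn}. Consequently, in $\mathcal{P}(D_{2n})$ the vertex $a^ib$ is adjacent only to $e$, and in $\mathcal{P}(Q_{4n})$ the neighbourhood of $a^ib$ is $\{e,a^n,a^{n+i}b\}$, so the $2n$ vertices outside $\langle a\rangle$ split into the $n$ ``reflection pairs'' $\{a^ib,a^{n+i}b\}$, $0\le i\le n-1$. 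If $\mathcal{P}(G)$ is not cyclically separable then $c\kappa(\mathcal{P}(G))=\infty\neq\kappa(\mathcal{P}(G))$ and there is nothing to prove; so, in view of Theorems \ref{thmdihedral} and \ref{thmdicyclic}, I may assume $\mathcal{P}(G)$ is cyclically separable. I will also use repeatedly that $\langle a\rangle$ is cyclic of order $n$ (resp. $2n$), so $\mathcal{P}(\langle a\rangle)\cong\mathcal{P}(C_n)$ (resp. $\mathcal{P}(C_{2n})$), together with the lemma that every vertex cutset of $\mathcal{P}(C_m)$ contains $O_m\cup O_1$, hence has at least $\phi(m)+1$ elements.

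First, let $G=D_{2n}$. Since $b\nsim ab$, the graph $\mathcal{P}(D_{2n})$ is not complete, and deleting the single universal vertex $e$ isolates $b$; hence $\kappa(\mathcal{P}(D_{2n}))=1$. Let $T$ be any cyclic cutset of $\mathcal{P}(D_{2n})$. As $e$ is adjacent to every other vertex, $e\in T$. I claim $T\cap\langle a\rangle$ is a vertex cutset of $\mathcal{P}(\langle a\rangle)$. Indeed, if $\langle a\rangle\subseteq T$ then already $|T|\ge n\ge3$; otherwise, if $\mathcal{P}(\langle a\rangle)-(T\cap\langle a\rangle)$ were connected, then, since every surviving vertex $a^ib$ is isolated in $\mathcal{P}(D_{2n})-T$ (its only neighbour $e$ having been deleted), the graph $\mathcal{P}(D_{2n})-T$ would have at most one component containing a cycle, contradicting that $T$ is a cyclic cutset. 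Thus $|T|\ge|T\cap\langle a\rangle|\ge\phi(n)+1\ge2$, and therefore $c\kappa(\mathcal{P}(D_{2n}))\ge2>1=\kappa(\mathcal{P}(D_{2n}))$.

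Now let $G=Q_{4n}$. Deleting $\{e,a^n\}$ leaves the $n$ reflection pairs $\{a^ib,a^{n+i}b\}$ as isolated $K_2$'s, each disconnected from the nonempty remainder $\langle a\rangle\setminus\{e,a^n\}$; so $\{e,a^n\}$ is a cutset, $\mathcal{P}(Q_{4n})$ is not complete, and $\kappa(\mathcal{P}(Q_{4n}))\le2$. Let $T$ be a cyclic cutset of $\mathcal{P}(Q_{4n})$; again $e\in T$. I claim $T\cap\langle a\rangle$ is a vertex cutset of $\mathcal{P}(\langle a\rangle)$. If $\langle a\rangle\subseteq T$ this is clear and $|T|\ge2n\ge4$; otherwise suppose $\mathcal{P}(\langle a\rangle)-(T\cap\langle a\rangle)$ is connected. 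If $a^n\notin T$, then $a^n\in\langle a\rangle\setminus T$ and every surviving $a^ib$ is adjacent to $a^n$, so $\mathcal{P}(Q_{4n})-T$ is connected --- impossible. If $a^n\in T$, then every surviving $a^ib$ has all its neighbours among the $a^{n+i}b$, so the surviving $a^ib$'s form $K_1$- or $K_2$-components, whence $\mathcal{P}(Q_{4n})-T$ has at most one component containing a cycle --- again impossible. Hence $T\cap\langle a\rangle$ is a cutset of $\mathcal{P}(C_{2n})$, so it contains $O_{2n}\cup O_1$ and $|T|\ge\phi(2n)+1\ge3$ (using $n\ge2$). Therefore $c\kappa(\mathcal{P}(Q_{4n}))\ge3>2\ge\kappa(\mathcal{P}(Q_{4n}))$, which finishes the proof.

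The only genuinely delicate point is the claim, in each case, that the trace of a cyclic cutset on $\langle a\rangle$ is already a cutset of the corresponding cyclic power graph. For $D_{2n}$ this is immediate because the vertices outside $\langle a\rangle$ have degree $1$. For $Q_{4n}$ it is the real obstacle: the central involution $a^n$ is adjacent to all $2n$ vertices outside $\langle a\rangle$ (even though it is not adjacent to all of $\langle a\rangle$), so the possibility $a^n\notin T$ has to be ruled out on its own, which is exactly what the short case split above does. Everything else is bookkeeping with $\phi(n)+1\ge2$ and $\phi(2n)+1\ge3$.
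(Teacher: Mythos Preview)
Your proof is correct and reaches the same conclusion by a somewhat different route. The paper argues more directly: it identifies the minimum cutset explicitly ($\{e\}$ for $D_{2n}$, $\{e,a^n\}$ for $Q_{4n}$, citing \cite{chattopadhyay2014} for $\kappa=1$ and $\kappa=2$), and then simply checks that deleting this set leaves $\mathcal{P}(\langle a\rangle)\setminus\{e\}$ (resp.\ $\mathcal{P}(\langle a\rangle)\setminus\{e,a^n\}$) connected while the reflection vertices become isolated $K_1$'s (resp.\ $K_2$'s), so that the minimum cutset is not cyclic and $c\kappa>\kappa$. You instead take an \emph{arbitrary} cyclic cutset $T$ and prove that $T\cap\langle a\rangle$ must already disconnect $\mathcal{P}(\langle a\rangle)$; combined with the lemma that every cutset of $\mathcal{P}(C_m)$ contains $O_m\cup O_1$, this yields the quantitative bounds $c\kappa(\mathcal{P}(D_{2n}))\ge\phi(n)+1$ and $c\kappa(\mathcal{P}(Q_{4n}))\ge\phi(2n)+1$, which are stronger than what the paper states. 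The case split on whether $a^n\in T$ in the dicyclic case is the one genuinely new step your approach requires. A minor remark: the case $\langle a\rangle\subseteq T$ is in fact vacuous for a cyclic cutset (the complement would contain no cycles at all), so your treatment there is harmless but unnecessary.
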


\begin{proof}
We begin with $D_{2n}$. From (\ref{dihedral2}) (also, see \cite{chattopadhyay2014}), $\{e\}$ is the cutset of  $\mathcal{P}(D_{2n})$. In particular, $\kappa(\mathcal{P}(D_{2n})) = 1$. From (\ref{dihedral1}),
\begin{align}
\mathcal{P}(D_{2n})- \{e\} \cong  (\mathcal{P}(C_n)- \{e\}) + nK_1.
\end{align}
Since the generators of $C_n$ are adjacent to all other vertices in $\mathcal{P}(C_n)$, $\mathcal{P}(C_n)- \{e\}$ is connected. Thus $\mathcal{P}(D_{2n})- \{e\}$ have at most one component containing cycles. This along with Theorem \ref{thmdihedral} implies that $c\kappa(\mathcal{P}(D_{2n})) > 1$. Hence $\kappa(\mathcal{P}(D_{2n})) \neq c\kappa(\mathcal{P}(D_{2n}))$.

We next consider $Q_{4n}$. It was shown shown in \cite{chattopadhyay2014} that $\{e, a^n \}$ is the minimum cutset of $\mathcal{P}(Q_{4n})$, and so $\kappa(\mathcal{P}(Q_{4n})) = 2$. We have

$$\mathcal{P}(Q_{4n}) - \{e, a^n \} = (\mathcal{P}(\langle a \rangle) - \{e, a^n \}) + \mathcal{P}(\{b, ab, \dots, a^{2n-1}b\}).$$ 

Since the generators of $\langle a \rangle$ are adjacent to all other vertices in $\mathcal{P}(\langle a \rangle)$, $\mathcal{P}(\langle a \rangle) - \{e, a^n \}$ is connected. Whereas, $\mathcal{P}(\{b, ab, \dots, a^{2n-1}b\}) \cong n K_2$. Hence $\mathcal{P}(Q_{4n}) - \{e, a^n \}$ has at most one component containing cycles. This together with Theorem \ref{thmdicyclic} implies that $c\kappa(\mathcal{P}(Q_{4n})) > 2$. Hence $\kappa(\mathcal{P}(Q_{4n})) \neq c\kappa(\mathcal{P}(Q_{4n}))$.
\end{proof}


\begin{thebibliography}{99}

	\bibitem{Abawajy}
	Abawajy, J., Kelarev, A., and Chowdhury, M. (2013).
	\newblock Power graphs: a survey.
	\newblock {\em Electron. J. Graph Theory Appl.} {1}(2):125--147.
	
	\bibitem{Brauer}
	R. Brauer and K. A. Fowler, On groups of even order. \emph{Ann. Math.} 62 (1955) 565-583.
	
	\bibitem{Bertram}
	E. A. Bertram, Some applications of graph theory to finite groups. \emph{Discrete Math.}
	44(1) (1983), 31–43.
	
	\bibitem{Bertram2}
	E. A. Bertram, M. Herzog, and A. Mann, On a graph related to conjugacy classes of groups.
	\emph{Bull. London Math. Soc.} 22 (1990), 569–575.
	
	\bibitem{Bianchi}
	M. Bianchi, D. Chillag, A.G.B. Mauri, M. Herzog, C.M. Scoppola, Applications of a graph related to conjugacy classes in finite groups. \emph{Arch. Math. (Basel)} 58(2):126–132, 1992.
	
	\bibitem{birkhoff1913}
		G. D.~Birkhoff.
		\newblock The reducibility of maps.
		\newblock {\em Amer. J. Math.}, 35:115--128, 1913.
	
	
\bibitem{cayley1878desiderata}
Cayley, P.: Desiderata and {S}uggestions: {N}o. 2. {T}he {T}heory of {G}roups:
{G}raphical {R}epresentation.
\newblock \emph{Amer. J. Math.} \textbf{1}(2), 174--176 (1878)

\bibitem{chattopadhyay2014}
S.~Chattopadhyay and P.~Panigrahi.
\newblock Connectivity and planarity of power graphs of finite cyclic, dihedral  and dicyclic groups.
\newblock {\em Algebra Discrete Math.}, 18(1):42--49, 2014.

\bibitem{chattopadhyay-noncyclic}
S. Chattopadhyay, K. L. Patra, and B. K. Sahoo. Minimal cut-sets in the power graphs of certain finite non-cyclic groups. \emph{Communications in Algebra} 49(3):1195--1211, 2020.

\bibitem{chattopadhyay2019}
S. Chattopadhyay, K. L. Patra, and B. K. Sahoo. Vertex connectivity of the power graph of a finite cyclic group. \emph{Discrete Appl. Math.} 266:259--271, 2019. 

\bibitem{chattopadhyay2020}	
S. Chattopadhyay, K. L. Patra, and B. K. Sahoo. Vertex connectivity of the power graph of a finite cyclic group II. \emph{Journal of Algebra and its Applications} 19(2):2050040, 2020.

\bibitem{cooperman}
G. Cooperman, L. Finkelstein, N. Sarawagi, Applications of Cayley graphs, in: Applied Algebra, Algebraic
Algorithms and Error-Correcting Codes (Tokyo, 1990), in: Lecture Notes in Comput. Sci, vol. 508, Springer,
Berlin, 1991, pp. 367–378.

	\bibitem{cur-2014}
B.~Curtin and G.~R. Pourgholi.
\newblock Edge-maximality of power graphs of finite cyclic groups.
\newblock {\em J. Algebraic Combin.}, 40(2):313--330, 2014.	

\bibitem{Hayat}
U. Hayat, M. Umer, I. Gutman, B. Davvaz, and ´A. N. de Celis. A novel method to construct nssd molecular graphs.
Open Mathematics, 17(1):1526–1537, 2019.

\bibitem{kelarevquinn}
A.V. Kelarev and S. Quinn, A combinatorial property and power graphs of groups. In: Contributions to
General Algebra, vol. 12, pp. 229–235. Heyn, Klagenfurt (2000).

\bibitem{kelarev_data}
A. Kelarev, J. Ryan, J. Yearwood, Cayley graphs as classifiers for data mining: the influence of asymmetries. \emph{Discrete Math.} 309 (17) (2009) 5360–5369.

\bibitem{Kumar}
A. Kumar, L. Selvaganesh, P.J. Cameron, and T. Tamizh Chelvam, Recent
developments on the power graph of finite groups – a survey. \emph{AKCE Int. J. Graphs Comb.} 18(2) (2021), 65–94.

\bibitem{latifi1994}
		S. Latifi, M. Hegde, and M. Naraghi-Pour.
		\newblock Conditional connectivity measures for large multiprocessor systems.
		\newblock {\em IEEE Trans. Comput.}, 43(2):218--222, 1994.
		
		
\bibitem{li2002}
 C. H. Li. On isomorphisms of finite Cayley graphs -- a survey. \emph{Discrete Mathematics} 256:301--334, 2002.

\bibitem{liu2011}
		Q. Liu, Z. Zhang, and Z. Yu.
		\newblock Cyclic connectivity of star graph.
		\newblock \emph{Discrete Math. Algorithms Appl.} 3(4):433--442, 2011.
		
		\bibitem{liu2022}
		X. Liu, S. Zhou, and H. Zhang.
		\newblock Cyclic Vertex (Edge) Connectivity of Burnt Pancake Graphs.
		\newblock \emph{Parallel Process. Lett.} 32(3-4):2250006, 15 pp., 2022.


	\bibitem{MRS-JAA}
A.~R. Moghaddamfar, S.~Rahbariyan, and W.~J. Shi.
\newblock Certain properties of the power graph associated with a finite group.
\newblock {\em J. Algebra Appl.}, 13(7):1450040, 18pp, 2014.


\bibitem{Mukherjee2024}	
S. Mukherjee, K. L. Patra, and B. K. Sahoo. On the minimum cut-sets of the power graph of a finite cyclic group. \emph{Journal of Algebra and its Applications} 23(11):2450176, 2024.

\bibitem{nedela2022}
		R. Nedela and M. \v{S}koviera.
		\newblock Cyclic connectivity, edge-elimination, and the twisted Isaacs graphs.
		\newblock {\em J. Combin. Theory Ser. B}, 155:17--44, 2022.  

\bibitem{panda2024}
R. P. Panda. Characterizations of p-groups whose power graphs satisfy certain connectivity conditions. \emph{Discrete Mathematics, Algorithms and Applications}, https://doi.org/10.1142/S1793830925500259
	
	\bibitem{panda2018a}
R.~P. Panda and K.~V. Krishna.
\newblock On connectedness of power graphs of finite groups.
\newblock {\em J. Algebra Appl.}, 17(10):1850184, 20pp, 2018.

\bibitem{panda2018b}
R.~P. Panda and K.~V. Krishna.
\newblock On minimum degree, edge-connectivity and connectivity of power graphs
of finite groups.
\newblock {\em Comm. Algebra}, 46(7):3182--3197, 2018.

\bibitem{panda2024equality}
R. P. Panda, K. L. Patra, and B. K. Sahoo. Characterizing finite nilpotent groups associated with a graph theoretic equality. \emph{Ricerche di Matematica} 73(2):1017--1027, 2024.
	
	\bibitem{panda2021}
	R. P. Panda, K. L. Patra, and B. K. Sahoo. On the minimum degree of the power graph of a finite cyclic group. {\em J. Algebra Appl.} {20}(3):2150044, 18pp, 2021.

\bibitem{panda2023}
	R. P. Panda, K. L. Patra, and B. K. Sahoo. On the minimum degree of power graphs of finite nilpotent groups. {\em Communications in Algebra} {51}(1):314--329, 2023.
	
	\bibitem{robertson1984}
		N. Robertson.
		\newblock Minimal cyclic-connected graphs.
		\newblock \emph{Trans. Amer. Math. Soc.}, 284:665--687, 1984.


\bibitem{tait1880}
		P.~G. Tait.
		\newblock Remarks on the coloring of maps.
		\newblock {\em Proc. Roy. Soc. Edinburg}, 10:501--503, 1880.


\bibitem{Williams}
J. S. Williams, Prime graph components of finite groups. \emph{J. Algebra} 69:487--513, 1981.

\bibitem{Witte}
D. Witte and J. A. Gallian. A survey: Hamiltonian cycles in Cayley graphs. 
\emph{Discrete Mathematics} 51(3):293--304, 1984.


\bibitem{zhang1997}
		C.~Q. Zhang.
		\newblock \emph{Integer flows and cycle covers of graphs}. 
		\newblock Marcel Dekker Inc., New York, 1997.	
	
\end{thebibliography}
\end{document}